\newenvironment{proof}{\emph{Proof:}}{\hspace{\stretch{1}}\rule{1ex}{1ex}}
\newtheorem{definition}{Definition}
\newtheorem{lemma}{Lemma}
\newtheorem{remark}{Remark}
\newtheorem{theorem}{Theorem}
\begin{document}
%
% paper title
% Titles are generally capitalized except for words such as a, an, and, as,
% at, but, by, for, in, nor, of, on, or, the, to and up, which are usually
% not capitalized unless they are the first or last word of the title.
% Linebreaks \\ can be used within to get better formatting as desired.
% Do not put math or special symbols in the title.
\title{Stability analysis of random nonlinear systems with time-varying delay
and its application}
%
%
% author names and IEEE memberships
% note positions of commas and nonbreaking spaces ( ~ ) LaTeX will not break
% a structure at a ~ so this keeps an author's name from being broken across
% two lines.
% use \thanks{} to gain access to the first footnote area
% a separate \thanks must be used for each paragraph as LaTeX2e's \thanks
% was not built to handle multiple paragraphs
%

\author{Liqiang~Yao, Weihai~Zhang${}^*$% <-this % stops a space
\thanks{${}^*$  Corresponding author.}
\thanks{This work was supported by National Natural Science Foundation
of China (Nos: 61573227, 61633014), SDUST Research Fund (No.
2015TDJH105), Research Fund for the Taishan Scholar Project of
Shandong Province of China. }
\thanks{Liqiang~Yao is with the College of Electrical Engineering and Automation, Shandong
University of Science and Technology, Qingdao,  Shandong Province,
266590, P. R. China and the School of Mathematics and Information
Science, Yantai University, Yantai, Shandong Province, 264005, P. R. China.}% <-this % stops a space
\thanks{Weihai~Zhang is with the College of Electrical Engineering and Automation, Shandong
University of Science and Technology, Qingdao,  Shandong Province, 266590,
P. R. China. (Corresponding author: w\_hzhang@163.com).}% <-this % stops a space
\thanks{Manuscript received April 19, 2005; revised August 26, 2015.}}

% note the % following the last \IEEEmembership and also \thanks -
% these prevent an unwanted space from occurring between the last author name
% and the end of the author line. i.e., if you had this:
%
% \author{....lastname \thanks{...} \thanks{...} }
%                     ^------------^------------^----Do not want these spaces!
%
% a space would be appended to the last name and could cause every name on that
% line to be shifted left slightly. This is one of those "LaTeX things". For
% instance, "\textbf{A} \textbf{B}" will typeset as "A B" not "AB". To get
% "AB" then you have to do: "\textbf{A}\textbf{B}"
% \thanks is no different in this regard, so shield the last } of each \thanks
% that ends a line with a % and do not let a space in before the next \thanks.
% Spaces after \IEEEmembership other than the last one are OK (and needed) as
% you are supposed to have spaces between the names. For what it is worth,
% this is a minor point as most people would not even notice if the said evil
% space somehow managed to creep in.

% The paper headers
\markboth{Journal of \LaTeX\ Class Files
%,~Vol.~14, No.~8, August~2015
}%
{Shell \MakeLowercase{\textit{et al.}}: Bare Demo of IEEEtran.cls for IEEE Journals}
% The only time the second header will appear is for the odd numbered pages
% after the title page when using the twoside option.
%
% *** Note that you probably will NOT want to include the author's ***
% *** name in the headers of peer review papers.                   ***
% You can use \ifCLASSOPTIONpeerreview for conditional compilation here if
% you desire.

% If you want to put a publisher's ID mark on the page you can do it like
% this:
%\IEEEpubid{0000--0000/00\$00.00~\copyright~2015 IEEE}
% Remember, if you use this you must call \IEEEpubidadjcol in the second
% column for its text to clear the IEEEpubid mark.

% use for special paper notices
%\IEEEspecialpapernotice{(Invited Paper)}

% make the title area
\maketitle

% As a general rule, do not put math, special symbols or citations
% in the abstract or keywords.
\begin{abstract}
This paper studies a class of random nonlinear systems with
time-varying delay,  in which the $r$-order moment ($r\geq1$)
of the random disturbance is finite. Firstly, some general conditions
are proposed to guarantee the existence and uniqueness of the global
solution to random nonlinear time-delay systems.
Secondly, some definitions and criteria on noise-to-state stability in
the moment sense and in probability sense are given by Lyapunov method
respectively. Finally, two regulation controllers are constructed
respectively for two corresponding random nonlinear time-delay
systems and the effectiveness of two proposed  recursive procedures are
demonstrated by two simulation examples.
\end{abstract}

% Note that keywords are not normally used for peerreview papers.
\begin{IEEEkeywords}
Random nonlinear systems, time-varying delay, regulation controller,
noise-to-state stability.
\end{IEEEkeywords}

% For peer review papers, you can put extra information on the cover
% page as needed:
% \ifCLASSOPTIONpeerreview
% \begin{center} \bfseries EDICS Category: 3-BBND \end{center}
% \fi
%
% For peerreview papers, this IEEEtran command inserts a page break and
% creates the second title. It will be ignored for other modes.
\IEEEpeerreviewmaketitle

\section{Introduction}\label{S1}
% The very first letter is a 2 line initial drop letter followed
% by the rest of the first word in caps.
%
% form to use if the first word consists of a single letter:
% \IEEEPARstart{A}{demo} file is ....
%
% form to use if you need the single drop letter followed by
% normal text (unknown if ever used by the IEEE):
% \IEEEPARstart{A}{}demo file is ....
%
% Some journals put the first two words in caps:
% \IEEEPARstart{T}{his demo} file is ....
%
% Here we have the typical use of a "T" for an initial drop letter
% and "HIS" in caps to complete the first word.
Time-delay is widespread in many actual systems
such as network transmission systems, hydraulic systems and power systems.
The time-delay phenomenon is often caused by  the
inherent characteristics of physical systems such as communication system
and mechanical drive system. In addition, the devices of practical
systems (for instance, controllers and actuators) need time to complete
operations during the running process of systems, which  inevitably
makes the time-delay phenomenon. The existence of time-delay phenomenon can
affect system performance seriously and brings difficulties for system
analysis and synthesis. Nevertheless, time-delay phenomenon
sometimes can be used to improve the control performance of some systems such as
repetitive control systems. Therefore, it is of great significance to study
how to eliminate and utilize time-delay phenomena. Over the past few decades,
many scholars  have committed themselves to studying time-delay
systems in theory and engineering applications ([\ref{Malek-Zavarei1987}-\ref{Lakshmanan2010}]).

On the other hand, many practical systems in engineering, which are
often subject to random disturbance from external environment, are
modeled as stochastic nonlinear systems or random nonlinear
systems ([\ref{Soong1973}-\ref{Wu2015}]).
The difference lies in the random  disturbance which appeared in
the former is a white noise process (i.e., the derivative
of Wiener process), while the
random disturbance in the latter is a stationary stochastic
process. Since the white noise has infinite bandwidth,
some actual systems are more suitable to be modeled as
random nonlinear systems from the energy view.

Along with  the work of Khas'minskii, Krsti\'{c} and Deng, Mao
and other scholars in stochastic control field, the remarkable
development has been achieved (please refer to
[\ref{Khasminskii1980}], [\ref{Krstic1998}-\ref{Mao2007}] and the
references therein). This  promotes the study of stochastic
nonlinear time-delay systems. [\ref{Fu2005}] designed an
output-feedback controller by Lyapunov-based recursive method such
that a class of stochastic lower triangular systems with constant
delay only appeared in drift term is exponentially stable. For
general stochastic nonlinear time-delay systems, [\ref{Liu2008}]
proposed some conditions to guarantee that systems solution exists
and is unique, and gave the global stability criteria in probability
sense. Besides, the adaptive output-feedback stabilization controller
for a class of stochastic lower triangular  systems with time-delay is
constructed in [\ref{Liu2008}]. By the stability results in
[\ref{Liu2008}], [\ref{Liu2011}] constructed an output-feedback
controller to achieve globally asymptotically stable in probability sense
for stochastic high-order nonlinear time-delay systems satisfying
some assumptions  and [\ref{Liu2013}] studied stabilization control
for a class of stochastic upper-triangular nonlinear systems
with time-delay by state feedback approach. However, for random nonlinear
systems, most findings in existing literature (such as
[\ref{Wu2015}, \ref{ZhangD2016}-\ref{Yao2018}]) didn't investigate
the influence of time-delay phenomenon on systems dynamics behavior.
The corresponding results about random nonlinear time-delay systems
also have not been reported in existing literature. This motivates
us to focus on random nonlinear time-delay systems.

Inspired by [\ref{Wu2015}] and [\ref{Liu2008}], this paper considers the
stability of random time-delay systems and regulation control
problems for random  nonlinear feedback time-delay systems.
The main contributions of this paper include three aspects:\\
(1) For random  time-varying delay systems with some general conditions,
this paper analyzes the existence and uniqueness of global solution. \\
(2) Some definitions and criteria on noise-to-state stability
(including in the moment sense   and in probability sense )
are proposed based on Lyapunov function approach. \\
(3) Applying the obtained stability results,
two classes of regulation control problems are studied
by constructing different Lyapunov-Krasovskii functionals, respectively.

This paper is organized as follows. Section II investigates the
existence and uniqueness of global solution for random systems with
time-delay. Some definitions and noise-to-state
stability criteria  are proposed in Section III. As
applications, two kinds of regulation problems are discussed in Section IV and
Section V, respectively. In Section VI, two corresponding simulation
examples illustrate the feasibility and effectiveness of proposed
two controller design procedures. Section VII gives the conclusions
of this paper.

{\bfseries Notions:}
%The following notations  will be used in this paper.
$|x|$ and $x^{T}$ represent the usual Euclidean norm
and the transpose of vector $x$, respectively.
$||A||$ denotes the 2-norm of matrix A and  $||A||_F$ stands for the
Frobenius norm of matrix A.
$\mathbb{R}_+$ and $\mathbb{R}^n$ represent the set of nonnegative real numbers
and the real $n$-dimensional space, respectively.
$C([t-\tau, t]; \mathbb{R}^n)$ represents the space of continuous functions
$q$ from $[t-\tau, t]$ to $\mathbb{R}^n$ with the norm
$|q_t|=\sup_{-\tau \leq \theta \leq 0} |q(t+\theta)|$ for $\tau>0$ and $t\geq0$.
The set of functions with continuous $i$-th partial derivative is denoted as $C^i$,
and the function $W(t, x(t))\in
C^{1,1}([t_0-\tau, \infty) \times \mathbb{R}^n ; \mathbb{R}_+)$ means that
$W(t, x(t))$ are $C^1$ in $t$ and  $C^1$ in $x$.
$C_{\mathcal{F}_{t_0}}^b([t_0-\tau, t_0]; \mathbb{R}^n)$ denotes the family of all
$\mathcal{F}_{t_0}$-measurable bounded $C([t_0-\tau, t_0]; \mathbb{R}^n)$-valued
random variable $\varphi=\{\varphi(\theta):t_0-\tau \leq \theta \leq t_0\}$
with $ t_0 \geq 0$. $\gamma(t)\in\mathcal{K}$ means that the function $\gamma(t)$
defined on $\mathbb{R}_+$ is strictly increasing, continuous and vanish at origin;
$\bar{\gamma}(t)\in\mathcal{K}_\infty$ implies that
$\bar{\gamma}(t)\in\mathcal{K}$  and $\bar{\gamma}(t)$ is
unbounded; $\beta(s,t) \in \mathcal{KL}$ denotes that
$\beta(s,t) \in \mathcal{K}$ for each fixed $t$, and
$\lim_{t \rightarrow \infty}\beta(s,t)=0$ holds for each fixed $s$.
Function $h(s)$ is convex on $D$, if  for
any $s_1, s_2 \in D$, $h(s)$ satisfies $h((s_1+s_2)/2)\leq (h(s_1)+ h(s_2))/2$.

\section{Preliminaries}\label{S2}
In this section, one discusses the existence and uniqueness of solution to
random differential delay equations (RDDEs) that are a special class of
random functional differential equations (RFDEs). So, one firstly discusses the
the existence and uniqueness of solution to RFDEs and then analyze
the existence and uniqueness of solution to RDDEs.
\subsection{The existence and uniqueness of solution to RFDEs}
Consider the following random functional differential equation
\begin{equation}\label{eq21}
\dot{x}(t)=f(t, x_t)+g(t, x_t)\xi(t), ~~t\geq t_0,
\end{equation}
with the initial data
$x_{t_0}=\varphi=\{\varphi(t_0+\theta): -\tau \leq \theta \leq 0\}
\in C_{\mathcal{F}_{t_0}}^b([t_0-\tau, t_0]; \mathbb{R}^n)$,
where $x_t=\{x(t+\theta): \theta \in [-\tau, 0]\}$
is a $C([t-\tau, t]; \mathbb{R}^n)$ -valued stochastic process.
The state is $x(t)\in \mathbb{R}^{n}$,
$\xi(t)\in \mathbb{R}^{m} $ represents a  piecewise continuous and $\mathcal{F}_t$-adapted
stochastic process defined on the complete filtered probability space
$(\Omega, \mathcal{F}, \{\mathcal {F}_t\}_{t\geq t_0}, P)$,
and it has finite $r$-order moment (i.e., $\sup_{t\geq t_0} E|\xi(t)|^r<K$
with $r\geq 1$ and $K$ being positive constants). For $t_0\leq t \leq T <\infty$, functions
$f: [t_0, T] \times C([t-\tau, t]; \mathbb{R}^n)\rightarrow \mathbb{R}^n $ and
$g: [t_0, T] \times C([t-\tau, t]; \mathbb{R}^n)\rightarrow \mathbb{R}^{n\times m}$
are Borel measurable, piecewise continuous in $t$ and locally Lipschitz continuous
in $x_t\in \mathbb{R}^n $, moreover, $f(t, 0)$ and $g(t, 0)$ are bounded.
\begin{definition}\label{def21}
A solution $x(t)$ to equation (\ref{eq21}) on $[t_0-\tau, T]$ with  initial value
$x_{t_0}=\varphi$ is an  $\mathbb{R}^n$-valued stochastic process
and satisfies that \\
(i) $x(t)$ and $\{x_t\}_{t_0 \leq t \leq T}$ are continuous and $\mathcal{F}_t$-adapted.\\
(ii) $x(t)=x_{t_0} + \int_{t_0}^t f(s, x_s)ds + \int_{t_0}^t g(s, x_s)\xi(s)ds$
holds almost surely for $t_0\leq t\leq T$.
\end{definition}

The uniqueness of solution to equation (\ref{eq21}) is in almost sure sense, that is
to say, a solution $x(t)$ is called to be unique if
$P\{x(t)=\bar{x}(t), t_0-\tau \leq t\leq T\}=1$, where
$\bar{x}(t)$ is another solution to equation (\ref{eq21}).
For any $ T>t_0-\tau$, if there exists a unique solution
to equation (\ref{eq21}) on $[t_0-\tau, T]$ , equation (\ref{eq21})
has a unique solution on $[t_0-\tau, \infty)$.

In order to establish the sufficient conditions, which ensure that
the global solution to equation (\ref{eq21}) exists and is unique,
one introduces the first exit time  from a region
$U_k=\{x: |x|<k\}$ and its limit.
Let $\rho_k=\inf\{t\geq t_0 : |x(t)| \geq k\}$
and $\rho_\infty=\lim_{k\rightarrow\infty}\rho_k$ almost surely,
where $\rho_k$ are stopping times,
$ \inf\phi=\infty $, and $k \geq 1$ is any integer.

\begin{lemma}\label{lem2a1}
For equation (\ref{eq21}), if there is a
function $V(t,x(t))\in C^{1,1}([t_0-\tau, \infty)\times \mathbb{R}^n ; \mathbb{R}_+)$
and constants $c_0$ and $d_0>0$ such that for any integer $k \geq 1$ and  $t\geq t_0$,
\begin{gather}
\label{lem2a1a}\lim_{k\rightarrow\infty}\inf_{|x|\geq k}V(t,x)=\infty,\\
\label{lem2a1b} EV(\rho_k \wedge t, x(\rho_k \wedge t))\leq d_0 e^{c_0t},
\end{gather}
then there exists a unique solution  on $[t_0-\tau, \infty)$ to equation (\ref{eq21}) .
\end{lemma}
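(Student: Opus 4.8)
The plan is to follow the classical Khasminskii--Mao non-explosion scheme, adapted to this random functional setting. First I would establish a unique \emph{local} solution by truncation. For each integer $k\ge1$ introduce coefficients $f_k,g_k$ that coincide with $f,g$ whenever $\sup_{\theta}|x(t+\theta)|\le k$ and are extended so as to be globally Lipschitz in $x_t$; such an extension exists because $f$ and $g$ are locally Lipschitz in $x_t$. Since $f(t,0),g(t,0)$ are bounded and $\xi$ has finite $r$-order moment, a standard Picard iteration (contraction argument) on $C([t_0-\tau,T];\mathbb{R}^n)$ produces a unique solution $x^{(k)}$ of the truncated equation for every finite $T$. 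As the truncated equation agrees with (\ref{eq21}) on $\{t:|x(t)|<k\}$, uniqueness forces the $x^{(k)}$ to be mutually consistent, and patching them yields a unique solution $x(t)$ of (\ref{eq21}) on the random maximal interval $[t_0-\tau,\rho_\infty)$, where $\rho_\infty=\lim_{k\to\infty}\rho_k$.

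It then remains to show that the solution does not explode, i.e.\ $\rho_\infty=\infty$ almost surely; this is the crux of the argument. Fix any $T>t_0$ and set
\[
\mu_k=\inf_{t\ge t_0-\tau,\ |x|\ge k}V(t,x),
\]
so that condition (\ref{lem2a1a}) gives $\mu_k\to\infty$ as $k\to\infty$. By continuity of the sample paths, on the event $\{\rho_k\le T\}$ the solution satisfies $|x(\rho_k)|=k$, whence $V(\rho_k\wedge T,x(\rho_k\wedge T))=V(\rho_k,x(\rho_k))\ge\mu_k$ there. Using $V\ge0$ and taking expectations,
\[
\mu_k\,P\{\rho_k\le T\}\le E\big[\mathbf{1}_{\{\rho_k\le T\}}\,V(\rho_k\wedge T,x(\rho_k\wedge T))\big]\le EV(\rho_k\wedge T,x(\rho_k\wedge T)).
\]

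Now I would invoke the hypothesis (\ref{lem2a1b}) with $t=T$, which bounds the last expectation by $d_0e^{c_0T}$, giving
\[
P\{\rho_k\le T\}\le\frac{d_0e^{c_0T}}{\mu_k}.
\]
Since $\rho_k$ increases to $\rho_\infty$, the events $\{\rho_k\le T\}$ decrease to $\{\rho_\infty\le T\}$, so letting $k\to\infty$ and using $\mu_k\to\infty$ yields $P\{\rho_\infty\le T\}=0$. As $T>t_0$ is arbitrary, $P\{\rho_\infty<\infty\}=0$, i.e.\ $\rho_\infty=\infty$ almost surely. Hence the local solution extends uniquely to all of $[t_0-\tau,\infty)$, which proves the lemma. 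The main obstacle is precisely this non-explosion step: one must handle the stopping times $\rho_k\wedge T$ correctly, justify that $|x(\rho_k)|=k$ on $\{\rho_k\le T\}$, and pass to the limit through the monotone family of events $\{\rho_k\le T\}$. The local existence--uniqueness step, while requiring some care with the moment bound on $\xi$ in the Picard estimates, is otherwise routine.
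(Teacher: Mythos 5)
Your proposal is correct and follows essentially the same route as the paper: truncated coefficients to build the unique maximal local solution on $[t_0-\tau,\rho_\infty)$, then the Khasminskii-type non-explosion estimate $\mu_k\,P\{\rho_k\le T\}\le EV(\rho_k\wedge T,x(\rho_k\wedge T))\le d_0e^{c_0T}$ combined with (\ref{lem2a1a}) to force $\rho_\infty=\infty$ a.s. The only difference is cosmetic: you pass to the limit directly through the decreasing events $\{\rho_k\le T\}$, whereas the paper argues by contradiction from $P\{\rho_\infty\le T\}>\varepsilon$.
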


\begin{proof}
For every integer $k \geq 1$, one defines the following truncation functions
\begin{align*}
f_k(t, x_t)=
\begin{cases}
f(t, x_t) &\textmd{if}~~|x_t|\leq k \\
f(t, kx_t/|x_t|) &\textmd{if}~~|x_t|>k \\
\end{cases}, \cr
g_k(t, x_t)=
\begin{cases}
g(t, x_t) &\textmd{if}~~|x_t|\leq k \\
g(t, kx_t/|x_t|) &\textmd{if}~~|x_t|>k \\
\end{cases}.
\end{align*}
Then $f_k(t, x_t)$ and $g_k(t, x_t)$ satisfy the Lipschitz condition.
Adopting a similar way as Lemma 3 in [\ref{Wu2015}], one can
prove that there exists a unique $\mathcal {F}_t$-adapted solution $x_k(t)$ satisfying
\begin{equation}\label{eq22}
\dot{x}_k(t)=f_k(t, x_{t,k})+g_k(t, x_{t,k})\xi(t), ~~t\geq t_0,
\end{equation}
with the initial value $x_{t_0, k}=\varphi \in C_{\mathcal{F}_{t_0}}^b([t_0-\tau, t_0]; \mathbb{R}^n)$,
the details of proof is omitted here.
It is clear that
\begin{equation}\label{eq23}
x_k(t)=x_{k+1}(t), ~~~\forall t\in [t_0, \rho_k).
\end{equation}
Define $x(t)=x_k(t), t\in [t_0, \rho_k), k\geq 1$, % where $\rho_0=t_0$,
then it follows from (\ref{eq22}) and (\ref{eq23}) that for any
$t\geq t_0$ and $k\geq 1$,
\begin{align*}
x(t \wedge \rho_k)
=&\varphi(t_0) + \int_{t_0}^{t \wedge \rho_k}[ f(s, x_{s,k})+g(s, x_{s,k})\xi(s)]ds.
\end{align*}

First, one considers the case of $\rho_\infty<T<\infty$, then
\begin{equation*}
\limsup_{t\rightarrow \rho_\infty}|x(t)|
\geq\limsup_{k\rightarrow \infty}|x(\rho_k)|
=\limsup_{k\rightarrow \infty}|x_k(\rho_k)|=\infty,
\end{equation*}
which means that equation (\ref{eq21}) has a maximal solution $x(t), t\in[t_0, \rho_\infty)$,
where $\rho_\infty$ is often called the explosion time.
The uniqueness of $x_k(t)$ on $[t_0, \rho_\infty)$ can lead to the uniqueness of
the solution $x(t)$ on $[t_0, \rho_\infty)$. By now, there exists a unique solution $x(t)$ to
equation (\ref{eq21}) on $[t_0-\tau, \rho_\infty)$ and
the  solution $x(t)$ is $\mathcal{F}_t$-adapted, which can be inferred from
the fact that $x_{k}(t)$ is $\mathcal{F}_t$-adapted.

Next, one begins to show $\rho_\infty=\infty $  almost surely.
In fact, if $\rho_\infty < \infty$ almost surely, then there exist
constants $T>0$ and $\varepsilon>0$ such that $P\{\rho_\infty\leq T\}>\varepsilon$.
Because $\lim_{k\rightarrow\infty}\rho_k=\rho_\infty$ almost surely,
there is a integer $k_0$ such that
\begin{equation}\label{eq24}
P\{\rho_k\leq T\}>\varepsilon,~~\forall k\geq k_0.
\end{equation}
For any fixed $k(k\geq k_0)$, let $t=T$ in (\ref{lem2a1b}), then one
has $EV(T\wedge\rho_k, x(T\wedge\rho_k))\leq d_0 e^{c_0T}$,
from which one also can obtain
\begin{equation}\label{eq25}
E[V(\rho_k, x(\rho_k))I_{\{\rho_k\leq T\}}]\leq d_0 e^{c_0T}.
\end{equation}

On the other hand, one defines
$\varrho_{k,T}=\inf\{V(t,x(t)):|x(t)|\geq k, t\in [t_0, T]\}$,
then $\lim_{k\rightarrow\infty}\varrho_{k,T}=\infty$  by (\ref{lem2a1a}).
It follow from (\ref{eq24}) and (\ref{eq25}) that
$d_0 e^{c_0T}\geq \varrho_{k,T} P\{\rho_k\leq T\}>\varepsilon \varrho_{k,T}$.
Since letting $k\rightarrow \infty$ yields a contradiction, one has
$\rho_\infty=\infty$ almost surely.
So, there is a unique global solution $x(t)$
on $[t_0-\tau, \infty)$ for  equation (\ref{eq21}).
\end{proof}

\begin{lemma}\label{lem2a2}
For  equation (\ref{eq21}), aussume that there are a function
$V(t,x(t))\in C^{1,1}([t_0-\tau, \infty)\times \mathbb{R}^n ; \mathbb{R}_+)$
and a constant $K_1>0$ such that
\begin{gather}
\label{lem2a2a}\lim_{|x|\rightarrow\infty}\inf_{t\geq t_0}V(t,x)=\infty,\\
%\label{lem2a2b}\dot{V}\leq K(1+V(t,x)+V(t+\theta, x(t+\theta))),
\label{lem2a2b}E\dot{V}(t,x)\leq K_1(1+E V(t,x)+E V(t+\theta,x(t+\theta))),
\end{gather}
where $-\tau \leq \theta \leq 0$ and $t\geq t_0$, then equation (\ref{eq21})
has a unique solution on $[t_0-\tau, \infty)$.
\end{lemma}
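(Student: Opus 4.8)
The plan is to reduce Lemma~\ref{lem2a2} to Lemma~\ref{lem2a1} by checking that hypotheses (\ref{lem2a2a})--(\ref{lem2a2b}) imply (\ref{lem2a1a})--(\ref{lem2a1b}). Condition (\ref{lem2a2a}) asserts radial unboundedness of $V$ \emph{uniformly} in $t$, which is stronger than the pointwise-in-$t$ requirement (\ref{lem2a1a}); hence the first hypothesis of Lemma~\ref{lem2a1} is immediate. All the work therefore goes into deriving the exponential bound (\ref{lem2a1b}) from the averaged differential inequality (\ref{lem2a2b}), which I would obtain by a delayed Gronwall estimate carried out on the process stopped at $\rho_k$.

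First I would freeze the trajectory at $\rho_k$. Writing $Y_k(t)=V(\rho_k\wedge t, x(\rho_k\wedge t))$, the fundamental theorem of calculus along the trajectory gives $Y_k(t)=Y_k(t_0)+\int_{t_0}^{t} I_{\{s<\rho_k\}}\dot V(s,x(s))\,ds$; taking expectations and applying Fubini's theorem (legitimate because on $\{s<\rho_k\}$ one has $|x(s)|<k$, so $\dot V$ is bounded and everything is integrable) yields
\[
EY_k(t)=EY_k(t_0)+\int_{t_0}^{t} E\big[I_{\{s<\rho_k\}}\dot V(s,x(s))\big]\,ds .
\]
On $\{s<\rho_k\}$ the dynamics coincide with the truncated dynamics of (\ref{eq22}), so the bound (\ref{lem2a2b}) applies to the integrand. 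The delayed term is handled by introducing the running supremum $\Phi(t)=\sup_{t_0\leq u\leq t} EV(\rho_k\wedge u, x(\rho_k\wedge u))$: for $s<\rho_k$ and $-\tau\leq\theta\leq0$ one has $s+\theta<\rho_k$, and splitting according to whether $s+\theta\geq t_0$ or $s+\theta<t_0$ bounds $EV(s+\theta,x(s+\theta))$ by $\Phi(s)$ in the former case and by the finite constant $M_0=\sup_{t_0-\tau\leq u\leq t_0}EV(u,\varphi(u))$ (finite since $\varphi$ is bounded and $V$ continuous) in the latter. Collecting terms and taking the supremum over $[t_0,t]$ on the left turns the identity into an inequality of the form
\[
\Phi(t)\leq M_0+K_1(1+M_0)(t-t_0)+2K_1\int_{t_0}^{t}\Phi(s)\,ds .
\]
The (generalized) Gronwall inequality then gives $\Phi(t)\leq d_0 e^{c_0 t}$ for suitable constants $c_0,d_0>0$; since $EY_k(t)\leq\Phi(t)$, this is precisely (\ref{lem2a1b}), and Lemma~\ref{lem2a1} delivers the unique global solution on $[t_0-\tau,\infty)$.

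I expect the main obstacle to be the correct bookkeeping of the delay term: one must separate the contribution from times before $t_0$, which is controlled by the fixed bounded initial data $\varphi$, from the contribution after $t_0$, which is absorbed into the running supremum $\Phi$, and then verify that passing to $\Phi$ genuinely converts (\ref{lem2a2b}) into a scalar Gronwall inequality with constant coefficients. A secondary technical point, and the reason the stopping at $\rho_k$ is indispensable, is the justification of the interchange of expectation with the time integral and the applicability of (\ref{lem2a2b}): freezing the trajectory at $\rho_k$ keeps $|x|$ bounded by $k$, so $\dot V$ and $V$ remain integrable and the averaged bound can be invoked on the truncated dynamics.
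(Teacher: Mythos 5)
Your proposal is correct, but it reaches the conclusion by a different packaging than the paper. You reduce everything to Lemma~\ref{lem2a1} by verifying its hypothesis (\ref{lem2a1b}) directly: a Gronwall estimate on the process frozen at the first exit time $\rho_k$ yields $EV(\rho_k\wedge t,x(\rho_k\wedge t))\le d_0e^{c_0t}$ with constants independent of $k$, and Lemma~\ref{lem2a1} then does the rest. The paper instead borrows only the existence of a unique maximal local solution from the proof of Lemma~\ref{lem2a1}, derives its own stopped bound $EV(t\wedge\varrho_k,\cdot)\le\bar H(t)e^{K_1(t-t_0)}$, and then re-runs the Chebyshev-type non-explosion argument with $\delta(R)=\inf_{|x|\ge R,\,t\ge t_0}V(t,x)$ rather than reusing the one already inside Lemma~\ref{lem2a1}. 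The technical core (stopped Gronwall plus a tail estimate killed by radial unboundedness) is the same; your route is more economical, and your bookkeeping of the delay term is actually tighter than the paper's: you split $s+\theta$ at $t_0$, bound the pre-$t_0$ contribution by the constant $M_0$ coming from the bounded initial data, and absorb the post-$t_0$ contribution into the running supremum $\Phi$, so Gronwall is applied to a scalar inequality with known coefficients. The paper leaves $K_1\int_{t_0}^{t}EV(s+\theta,x(s+\theta))ds$ inside the ``known'' forcing function $\bar H(t)$ even though it depends on the unstopped solution, which your version avoids. Two minor caveats: on $\{s<\rho_k\}$ the derivative $\dot V$ is integrable (since $E|\xi|\le(E|\xi|^r)^{1/r}<\infty$ and the coefficients are bounded for $|x|\le k$) but not bounded, because $\xi$ is only moment-bounded; and, like the paper, you apply the averaged hypothesis (\ref{lem2a2b}) to the indicator-weighted expectation $E[I_{\{s<\rho_k\}}\dot V]$, a step not literally covered by the stated hypothesis --- but that looseness is inherited from the lemma's formulation, not introduced by you.
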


\begin{proof}
From the proof of Lemma \ref{lem2a1}, one knows that there is a unique
maximal local solution $x(t)$ on $[t_0-\tau, \rho_\infty)$ for equation (\ref{eq21}).
Therefore, one need to testify that  $\rho_\infty=\infty $ almost surely.
For any $k \geq 1$,
let $\varrho_k=\rho_\infty \wedge \inf\{t_0 \leq t < \rho_\infty: |x(t)| \geq k\}$.
It is certain that $\varrho_\infty \leq \rho_\infty$ and
$\varrho_\infty=\lim_{k\rightarrow\infty}\varrho_k$ almost surely.

By Fubini's theorem [Theorem 2.39, \ref{Klebaner2005}] and (\ref{lem2a2b}), one has
\begin{align}\label{lem2eq1}
&EV(t \wedge \varrho_k, x(t \wedge \varrho_k))\cr
\leq & \bar{H}(t)+K_1\int_{t_0}^{t} \sup_{t_0\leq v \leq s}
[EV(v \wedge \varrho_k, x(v \wedge \varrho_k))]ds\cr
\triangleq &  H^*(t).
\end{align}
where $t \geq t_0$ , $k \geq 1$, and
$\bar{H}(t)=EV(t_0, x(t_0))+K_1(t-t_0)+K_1\int_{t_0}^{t}EV(s+\theta, x(s+\theta))ds>0$.
Because $H^*(t)$ is a increasing function of $t$,
one can obtain the following inequality
from (\ref{lem2eq1})
\begin{align*}
&\sup_{t_0\leq v \leq t} [EV(v \wedge \varrho_k, x(v \wedge \varrho_k))]\cr
\leq & \bar{H}(t)+K_1\int_{t_0}^{t} \sup_{t_0\leq v \leq s}
[EV(v \wedge \varrho_k, x(v \wedge \varrho_k))]ds.
\end{align*}
%where $\bar{H}(t)=\bar{H}(t)+K_1t$.
By Gronwall's inequality [Theorem 1.20, \ref{Klebaner2005}], one gets
\begin{align*}
\sup_{t_0\leq v \leq t} [EV(v \wedge \varrho_k, x(v \wedge \varrho_k))]
\leq\bar{H}(t)e^{K_1(t-t_0)} .
\end{align*}
It is certain that
\begin{align}\label{lem2eq2}
EV(t \wedge \varrho_k, x(t \wedge \varrho_k))
\leq \bar{H}(t)e^{K_1t}, ~~\forall t \geq t_0.
\end{align}
On the other hand, for any $ R \geq 0$, one defines that
$\delta(R)=\inf_{R\leq|x|, t_0\leq t}V(t,x(t))$,
then  it is easy to show that $\delta(|x(t)|)\leq V(t,x(t))$ and
$\lim_{R\rightarrow\infty}\delta(R)=\infty$ which comes from (\ref{lem2a2a}).
It follows from (\ref{lem2eq2}) that
\begin{align*}
P\{\varrho_k\leq t\}\leq \frac{E\delta(|x(t \wedge \varrho_k)|)}{\delta(k)}
\leq \frac{\bar{H}(t)e^{K_1t}}{\delta(k)}.
\end{align*}
Letting first $k \rightarrow \infty$ and then $t \rightarrow \infty$,
$P\{\varrho_\infty < \infty \}=0$ can be obtained, that is,
$\varrho_\infty = \infty$ almost surely.
This means that $\rho_\infty = \infty$ almost surely.
Thus, equation (\ref{eq21}) has a unique solution
on $[t_0-\tau, \infty)$.
\end{proof}
\subsection{The existence and uniqueness of solution to RDDEs}
For the following random differential delay equations
\begin{align}\label{eq26}
\dot{x}=f(t,  x(t-\tau(t)), x)+g(t, x(t-\tau(t)), x)\xi(t),
\end{align}
with the initial value $x_{t_0}=\varphi=\{\varphi(t):t_0-\tau \leq t
\leq t_0\}\in C_{\mathcal{F}_{t_0}}^b([t_0-\tau, t_0];
\mathbb{R}^n)$. System state is $x(t)\in \mathbb{R}^{n}$, and the
random disturbance  $\xi(t)\in \mathbb{R}^{m} $ is in accord
with that of system (\ref{eq21}). Borel measurable function
$\tau(t)$ is defined on $[t_0, \infty)$ and takes values on $[0, \tau]$.
Borel measurable functions $f: [t_0, \infty) \times
\mathbb{R}^n\times \mathbb{R}^n\rightarrow \mathbb{R}^n $ and $g:
[t_0, \infty) \times \mathbb{R}^n\times \mathbb{R}^n\rightarrow
\mathbb{R}^{n\times m} $  satisfy
locally Lipschitz condition with respect to $x(t-\tau(t))$ and $x(t) $,
respectively. In addition, $f(t, 0, 0)$ and $g(t,
0, 0)$ are bounded.

%Since $x_t=\{x(t+\theta):-\tau \leq \theta \leq 0\}$,
One introduces
$F(t, \bar{x}_t)=f(t, \bar{x}_t(-\tau(t)), \bar{x}_t(0))=f(t, x(t-\tau(t)), x(t))$ and
$G(t, \bar{x}_t)=g(t, \bar{x}_t(-\tau(t)), \bar{x}_t(0))=g(t, x(t-\tau(t)), x(t))$,
then equation (\ref{eq26}) can be rewritten as
\begin{equation}\label{eq27}
\dot{x}=F(t, \bar{x}_t)+G(t, \bar{x}_t)\xi(t), ~~t\geq t_0.
\end{equation}
This implies that RDDEs are in fact a special class of RFDEs.
Therefore, one can obtain the following sufficient conditions such that system (\ref{eq26})
has a unique global solution.

\begin{lemma}\label{lem2b1}
For  system (\ref{eq26}), if there are a function
$V(t,x(t))\in C^{1,1}([t_0-\tau, \infty)\times \mathbb{R}^n ; \mathbb{R}_+)$
and constants $c_0$ , $d_0>0$ such that (\ref{lem2a1a}) and (\ref{lem2a1b})
hold for any integer $k \geq 1$ and $t\geq t_0$, then there is a unique solution
to system (\ref{eq26}) with initial value
$x_{t_0}=\varphi\in C_{\mathcal{F}_{t_0}}^b([t_0-\tau, t_0]; \mathbb{R}^n)$.
\end{lemma}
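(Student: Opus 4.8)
The plan is to deduce this lemma directly from Lemma~\ref{lem2a1} by exploiting the reformulation of the RDDE (\ref{eq26}) as the RFDE (\ref{eq27}) that was introduced immediately before the statement. Observe that the two hypotheses imposed here, (\ref{lem2a1a}) and (\ref{lem2a1b}), are word for word the hypotheses of Lemma~\ref{lem2a1}, and the Lyapunov function $V$ and the constants $c_0,d_0$ play identical roles. Hence the only genuine work is to confirm that the rewritten system (\ref{eq27}), with coefficients $F(t,\bar{x}_t)=f(t,x(t-\tau(t)),x(t))$ and $G(t,\bar{x}_t)=g(t,x(t-\tau(t)),x(t))$, truly belongs to the class of RFDEs (\ref{eq21}) for which Lemma~\ref{lem2a1} was established; once this membership is verified, the conclusion follows at once.

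First I would check the structural requirements placed on the coefficients of (\ref{eq21}). Borel measurability and piecewise continuity in $t$ of $F$ and $G$ are inherited from the corresponding assumptions on $f$ and $g$, together with the Borel measurability of the delay function $\tau(t)$ on $[t_0,\infty)$. The boundedness hypotheses translate directly: evaluating the functionals at the zero element of $C([t-\tau,t];\mathbb{R}^n)$ sets both spatial slots to zero, so $F(t,0)=f(t,0,0)$ and $G(t,0)=g(t,0,0)$ are bounded by assumption on $f$ and $g$.

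The step I expect to be the only substantive point is the local Lipschitz continuity of the functional $F$ (and likewise $G$) in the argument $\bar{x}_t$ with respect to the sup-norm $|\bar{x}_t|=\sup_{-\tau\leq\theta\leq0}|\bar{x}_t(\theta)|$. For two functions $\bar{x}_t,\bar{y}_t$ ranging over a bounded set, I would write $|F(t,\bar{x}_t)-F(t,\bar{y}_t)|=|f(t,\bar{x}_t(-\tau(t)),\bar{x}_t(0))-f(t,\bar{y}_t(-\tau(t)),\bar{y}_t(0))|$ and then apply the assumed local Lipschitz continuity of $f$ in its two spatial arguments to bound this by $L\bigl(|\bar{x}_t(-\tau(t))-\bar{y}_t(-\tau(t))|+|\bar{x}_t(0)-\bar{y}_t(0)|\bigr)\leq 2L|\bar{x}_t-\bar{y}_t|$. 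The point worth emphasizing is that the merely Borel-measurable delay $\tau(t)$ causes no difficulty here: wherever $-\tau(t)$ happens to fall in $[-\tau,0]$, the pointwise evaluation $\bar{x}_t(-\tau(t))$ is always dominated by the sup-norm, so the functional Lipschitz constant is simply $2L$ and no regularity of $\tau(t)$ is needed.

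Having verified that (\ref{eq27}) is a bona fide instance of (\ref{eq21}), I would conclude by invoking Lemma~\ref{lem2a1} applied to (\ref{eq27}) with the same $V$, $c_0$ and $d_0$: the growth condition (\ref{lem2a1a}) and the moment bound (\ref{lem2a1b}) hold by hypothesis, so Lemma~\ref{lem2a1} furnishes a unique global solution on $[t_0-\tau,\infty)$ with initial value $x_{t_0}=\varphi$. Since (\ref{eq27}) is just (\ref{eq26}) rewritten, this is precisely the unique global solution of system (\ref{eq26}), completing the argument.
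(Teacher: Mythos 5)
Your proposal is correct and follows essentially the same route as the paper: both reduce Lemma~\ref{lem2b1} to Lemma~\ref{lem2a1} by verifying that the rewritten functional equation (\ref{eq27}) satisfies the local Lipschitz condition in $\bar{x}_t$, with the same key estimate $|\bar{x}_t(-\tau(t))-\bar{y}_t(-\tau(t))|\vee|\bar{x}_t(0)-\bar{y}_t(0)|\leq|\bar{x}_t-\bar{y}_t|$ yielding the Lipschitz constant $2L_R$. Your additional checks of measurability and boundedness at zero are sound and only make the argument slightly more explicit than the paper's.
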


\begin{proof}
For system (\ref{eq26}),  since both $f(t, x(t-\tau(t)), x)$ and
$g(t,  x(t-\tau(t)), x)$ are Borel measurable functions and
satisfy locally Lipschitz condition with respect to $x(t-\tau(t))$ and
$x$, respectively. In other words, for any $R>0$, there is a positive constant $L_R$ such that
for $\forall x_1, x_2\in U_R, x_1\neq x_2$ and
$\forall x_1(t-\tau(t)), x_2(t-\tau(t))\in U_R, x_1(t-\tau(t))\neq x_2(t-\tau(t))$,
\begin{align}\label{eq28}
&|f(t, x_1(t-\tau(t)), x_1)-f(t, x_2(t-\tau(t)), x_2)|\cr
\leq &L_R(|x_1-x_2|+|x_1(t-\tau(t))-x_2(t-\tau(t))|),\cr
&||g(t, x_1(t-\tau(t)), x_1)-g(t, x_2(t-\tau(t)), x_2)||\cr
\leq & L_R(|x_1-x_2|+|x_1(t-\tau(t))-x_2(t-\tau(t))|).
\end{align}
Note that $|q_t|=\sup_{0 \leq \tau(t) \leq \tau} |q(t-\tau(t))|$,
then one has
\begin{align}\label{lem29}
&|x_1-x_2|\vee |x_1(t-\tau(t))-x_2(t-\tau(t))|\cr
\leq &|\bar{x}_{1,t}-\bar{x}_{2,t}|.
\end{align}
It follows from (\ref{eq28}) and (\ref{lem29}) that
\begin{align}\label{eq30}
&|F(t, \bar{x}_{1,t})-F(t, \bar{x}_{2,t})| \vee ||G(t, \bar{x}_{1,t})-G(t, \bar{x}_{2,t})||\cr
\leq & 2L_R|\bar{x}_{1,t}-\bar{x}_{2,t}|.
\end{align}
This means that equation (\ref{eq27}) satisfies the locally Lipschitz condition.
From Lemma \ref{lem2a1}, there is a unique solution on $[t_0-\tau, \infty)$ to
system (\ref{eq27}) if there are a positive function
$V(t,x(t))\in C^{1,1}([t_0-\tau, \infty)\times \mathbb{R}^n ; \mathbb{R}_+)$,
constant $c_0$ and positive constant $d_0$ satisfying (\ref{lem2a1a})
and (\ref{lem2a1b}).
Since equation (\ref{eq26}) is equivalent to equation
(\ref{eq27}), there exists a unique solution to system (\ref{eq26}).
\end{proof}

\begin{definition}\label{def22} [Definition 1, \ref{Wu2015}]
For any $\varepsilon>0$, if there is a positive constant $\epsilon$ such that
$P\{\sup_{t\geq t_0}|\phi(t)|>\epsilon\}\leq \varepsilon$ holds,
then the stochastic process $\phi(t)$ is  bounded in probability.
\end{definition}

\begin{lemma}\label{lem2b2}
For  system (\ref{eq26}), if there are a function
$V(t,x(t))\in C^{1,1}([t_0-\tau, \infty)\times \mathbb{R}^n ; \mathbb{R}_+)$
and a constant $K_2>0$ such that for any $t\geq t_0$,
$\lim_{|x|\rightarrow\infty}\inf_{t\geq t_0}V(t,x)=\infty$ and
$E\dot{V}(t,x)\leq K_2(1+EV(t,x)+EV(t-\tau(t), x(t-\tau(t)))$ hold,
then system (\ref{eq26}) has a unique solution.
\end{lemma}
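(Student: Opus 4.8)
The plan is to follow verbatim the strategy already used for Lemma \ref{lem2b1}: rewrite the delay system (\ref{eq26}) as the functional equation (\ref{eq27}) through the maps $F(t,\bar x_t)$ and $G(t,\bar x_t)$, and then invoke Lemma \ref{lem2a2}, which is precisely the functional-equation counterpart of the present statement. First I would recall that, as shown in the proof of Lemma \ref{lem2b1} via the estimate (\ref{eq30}), $F$ and $G$ inherit the local Lipschitz property from $f$ and $g$, so (\ref{eq27}) is a genuine RFDE to which the local-existence argument of the earlier lemmas applies. In particular there is a unique maximal local solution $x(t)$ on $[t_0-\tau,\rho_\infty)$, exactly as in the proof of Lemma \ref{lem2a2}, and it remains only to establish $\rho_\infty=\infty$ almost surely.

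Next I would check that the two hypotheses of Lemma \ref{lem2a2} hold for (\ref{eq27}). The radial-unboundedness requirement $\lim_{|x|\to\infty}\inf_{t\geq t_0}V(t,x)=\infty$ is literally condition (\ref{lem2a2a}), so nothing is needed there. For the differential inequality the key observation is that, since $0\leq\tau(t)\leq\tau$, one may write $t-\tau(t)=t+\theta$ with $\theta=-\tau(t)\in[-\tau,0]$; hence the hypothesis $E\dot V(t,x)\leq K_2\bigl(1+EV(t,x)+EV(t-\tau(t),x(t-\tau(t)))\bigr)$ is a particular instance of (\ref{lem2a2b}) with $K_1=K_2$ and history index $\theta=-\tau(t)$. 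With both hypotheses in force, Lemma \ref{lem2a2} yields $\rho_\infty=\infty$ almost surely, and since (\ref{eq26}) and (\ref{eq27}) are equivalent, the unique global solution of (\ref{eq27}) on $[t_0-\tau,\infty)$ transfers to (\ref{eq26}).

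The delicate point, which I expect to be the main obstacle, is the time-varying nature of the delay. In the proof of Lemma \ref{lem2a2} the history index $\theta$ is held fixed, whereas here the relevant index $-\tau(s)$ moves with the integration variable $s$, so the term $\int_{t_0}^{t}EV(s+\theta,x(s+\theta))\,ds$ inside the auxiliary function $\bar H(t)$ becomes $\int_{t_0}^{t}EV(s-\tau(s),x(s-\tau(s)))\,ds$. The way around this is to note that for $s\in[t_0,t]$ one has $s-\tau(s)\in[t_0-\tau,t]$, so the delayed term is dominated by $\sup_{t_0-\tau\leq v\leq s}EV(v\wedge\varrho_k,x(v\wedge\varrho_k))$ together with the finite contribution of the bounded initial data $\varphi\in C_{\mathcal{F}_{t_0}}^b([t_0-\tau,t_0];\mathbb{R}^n)$. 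This bound keeps $\bar H(t)$ finite and measurable and lets the Fubini-plus-Gronwall estimate of Lemma \ref{lem2a2} run unchanged, after which the stopping-time comparison $P\{\varrho_k\leq t\}\leq \bar H(t)e^{K_2 t}/\delta(k)\to 0$ as $k\to\infty$ delivers $\varrho_\infty=\infty$, hence $\rho_\infty=\infty$ and non-explosion.
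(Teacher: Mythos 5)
Your proposal coincides with the paper's intended argument: the paper gives no separate proof of this lemma, stating in Remark~\ref{rem21} only that the proof is ``similar to that of Lemma~\ref{lem2a2},'' which is precisely the reduction you carry out by rewriting (\ref{eq26}) as the RFDE (\ref{eq27}) and applying Lemma~\ref{lem2a2} with history index $\theta=-\tau(t)$. Your additional care with the time-varying delay --- splitting $s-\tau(s)$ between the bounded initial segment on $[t_0-\tau,t_0]$ and the running supremum on $[t_0,s]$ before invoking Fubini and Gronwall --- is correct and in fact tightens the step that the paper's omitted sketch glosses over.
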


\begin{remark}\label{rem21}
Since the proof of Lemma \ref{lem2b2} is similar to that of Lemma \ref{lem2a2},
the detailed proof of Lemma \ref{lem2b2} is omitted  here.
\end{remark}

\begin{lemma}\label{lem2b3}
For  system (\ref{eq26}), if there exist a function
$V(t,x(t))\in C^{1,1}([t_0-\tau, \infty)\times \mathbb{R}^n ; \mathbb{R}_+)$
and a positive constant $d_0$ such that for any integer $k \geq 1$, (\ref{lem2a1a}) and
\begin{align}\label{lem2b2eq}
EV(t\wedge\rho_k, x(t\wedge\rho_k))\leq d_0
\end{align}
hold, then system (\ref{eq26}) has a unique solution and this unique solution is bounded in probability.
\end{lemma}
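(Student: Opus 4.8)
The plan is to prove the two assertions in turn, observing first that the existence-and-uniqueness claim is essentially free. Condition (\ref{lem2b2eq}) is precisely inequality (\ref{lem2a1b}) in the special case $c_0=0$ (equivalently, for any $c_0>0$ we have $d_0\le d_0 e^{c_0 t}$ since $e^{c_0 t}\ge 1$ for $t\ge t_0\ge 0$). Hence the pair of hypotheses (\ref{lem2a1a}) and (\ref{lem2b2eq}) immediately fulfils the requirements of Lemma \ref{lem2b1}, which already incorporates the locally Lipschitz reduction from the delayed equation (\ref{eq26}) to the functional form (\ref{eq27}). Therefore system (\ref{eq26}) has a unique global solution $x(t)$ on $[t_0-\tau,\infty)$, with $\rho_\infty=\infty$ almost surely. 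All the real work lies in upgrading this to boundedness in probability in the sense of Definition \ref{def22}.

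For the boundedness, I would set up a Markov-type estimate on the exit times $\rho_k$. Define $\delta(k)=\inf_{|x|\ge k,\,t\ge t_0}V(t,x)$; by the radial-unboundedness hypothesis (\ref{lem2a1a}) one has $\delta(k)\to\infty$ as $k\to\infty$, exactly as the quantity $\varrho_{k,T}$ was used in the proof of Lemma \ref{lem2a1}. On the event $\{\rho_k\le t\}$ the solution is continuous and first reaches level $k$ at $\rho_k$, so $|x(\rho_k)|=k$ and consequently $V(\rho_k,x(\rho_k))\ge\delta(k)$. Using $t\wedge\rho_k=\rho_k$ on this event together with (\ref{lem2b2eq}) gives the chain
\begin{align*}
d_0\ \ge\ EV(t\wedge\rho_k,x(t\wedge\rho_k))\ \ge\ E\big[V(\rho_k,x(\rho_k))I_{\{\rho_k\le t\}}\big]\ \ge\ \delta(k)\,P\{\rho_k\le t\},
\end{align*}
so that $P\{\rho_k\le t\}\le d_0/\delta(k)$ for every $t\ge t_0$ and every integer $k\ge 1$.

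The crucial point is that this bound is uniform in $t$: letting $t\to\infty$ yields $P\{\rho_k<\infty\}\le d_0/\delta(k)$. Now fix $\varepsilon>0$; since $\delta(k)\to\infty$, choose an integer $k$ large enough that $d_0/\delta(k)\le\varepsilon$ and set the threshold $\epsilon=k$. Because $|x(t^*)|>k$ for some $t^*\ge t_0$ forces $\rho_k\le t^*<\infty$, we have the inclusion $\{\sup_{t\ge t_0}|x(t)|>\epsilon\}\subseteq\{\rho_k<\infty\}$, whence $P\{\sup_{t\ge t_0}|x(t)|>\epsilon\}\le d_0/\delta(k)\le\varepsilon$. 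By Definition \ref{def22} the solution is bounded in probability, completing the argument.

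I expect the main obstacle to be conceptual rather than computational: recognizing that replacing the exponentially growing bound $d_0 e^{c_0 t}$ of (\ref{lem2a1b}) by the \emph{constant} bound $d_0$ of (\ref{lem2b2eq}) is exactly what permits sending $t\to\infty$ to control $\sup_{t\ge t_0}|x(t)|$ (in the earlier lemmas the growing bound could only be used at fixed $T$ while letting $k\to\infty$). The secondary care needed is the measure-theoretic justification that $V(\rho_k,x(\rho_k))\ge\delta(k)$ on $\{\rho_k\le t\}$, which relies on continuity of the sample paths, and the event inclusion that translates a bound on $P\{\rho_k<\infty\}$ into a bound on $P\{\sup_{t\ge t_0}|x(t)|>\epsilon\}$.
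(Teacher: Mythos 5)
Your proof is correct, and the first half (existence and uniqueness) is exactly the paper's: both observe that the constant bound $d_0$ in (\ref{lem2b2eq}) is a special case of the exponential bound (\ref{lem2a1b}), so Lemma \ref{lem2b1} applies verbatim. For the boundedness-in-probability half you take a genuinely different, and in fact tighter, route. The paper first lets $k\to\infty$ in (\ref{lem2b2eq}) to obtain the unstopped bound $EV(t,x(t))\le d_0$ and then invokes Lemma 1.4 of Khas'minskii to pass from this to $P\{\sup_{t\ge t_0}|x(t)|>r\}\le d_0/\inf_{t\ge t_0,\,|x|>r}V(t,x)$; as written, that step mixes a fixed-time expectation $EV(t,x(t))$ with the path event $\{\sup_{t\ge t_0}|x(t)|>r\}$, and the justification is carried entirely by the cited external lemma. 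You instead keep the stopping time, use continuity of the sample paths to get $V(\rho_k,x(\rho_k))\ge\delta(k)$ on $\{\rho_k\le t\}$, derive the $t$-uniform Chebyshev bound $P\{\rho_k\le t\}\le d_0/\delta(k)$, and convert it into the required statement via the inclusion $\{\sup_{t\ge t_0}|x(t)|>k\}\subseteq\{\rho_k<\infty\}$. This is self-contained (no appeal to Khas'minskii), reuses the same mechanism as the contradiction argument in Lemma \ref{lem2a1} (your $\delta(k)$ plays the role of $\varrho_{k,T}$ there), and correctly isolates the one place where the constancy of the bound $d_0$ is essential, namely in letting $t\to\infty$. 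Both arguments yield the same quantitative estimate, so nothing is lost; your version buys rigor at the cost of a slightly longer write-up.
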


\begin{proof}
According to Lemma \ref{lem2b1}, system (\ref{eq26}) has a unique global solution.
From (\ref{lem2b2eq}), one has
\begin{align*}
EV(t, x(t))=\lim_{k\rightarrow \infty}EV(\rho_k\wedge t, x(\rho_k\wedge t))\leq d_0.
\end{align*}
On the other hand, by Lemma 1.4 in [\ref{Khasminskii1980}], one gets
\begin{align*}
EV(t, x(t))&\geq \int_{\{\sup_{t\geq t_0}|x(t)|> r\}}V(t, x(t))dP\cr
&\geq P\{\sup_{t\geq t_0}|x(t)|> r\}\inf_{t\geq t_0,~|x(t)|> r}V(t, x(t)).
\end{align*}
Thus, there is a positive constant $r$  such that for any $\varepsilon>0$,
\begin{align*}
P\{\sup_{t\geq t_0}|x(t)|> r\}\leq \frac{d_0}{\inf_{t\geq t_0, ~|x(t)|> r}V(t, x(t))}<\varepsilon.
\end{align*}
This implies that the unique global solution of system (\ref{eq26}) is bounded in probability.
\end{proof}

\begin{lemma}\label{lem2b4} [\ref{Flett1980}]
$y(t)$, $k(t)$ and $w(t)$ are continuous functions on $[t_0, \infty)$ such that
$$\limsup_{\vartriangle t\rightarrow 0^+}
\frac{y(t+\vartriangle t)-y(t)}{\vartriangle t}
=D^{+}y(t)\leq k(t)y(t)+w(t)$$
holds for almost all $t\in[t_0, \infty)$.
Then,
$$y(t)\leq y(t_0)e^{\int_{t_0}^t k(s)ds}+\int_{t_0}^t e^{\int_{s}^t k(v)dv} w(s)ds, ~~\forall t\geq t_0.$$
\end{lemma}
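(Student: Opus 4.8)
The plan is to reduce this differential inequality to a monotonicity statement by an integrating-factor argument, exactly as in the classical scalar Gronwall proof, but carried out at the level of the upper right Dini derivative $D^{+}$ rather than an ordinary derivative. Since $k$ is continuous, the function $\mu(t)=e^{-\int_{t_0}^{t}k(s)ds}$ is $C^{1}$ and strictly positive with $\mu'(t)=-k(t)\mu(t)$. First I would set $z(t)=y(t)\mu(t)$, which is continuous on $[t_0,\infty)$, and establish the product rule for $D^{+}$ when one factor is differentiable. Writing the difference quotient as
\[
\frac{z(t+\vartriangle t)-z(t)}{\vartriangle t}
=\mu(t+\vartriangle t)\,\frac{y(t+\vartriangle t)-y(t)}{\vartriangle t}
+y(t)\,\frac{\mu(t+\vartriangle t)-\mu(t)}{\vartriangle t},
\]
and letting $\vartriangle t\to 0^{+}$, the factor $\mu(t+\vartriangle t)\to\mu(t)>0$ and the last quotient converges to $\mu'(t)$, so that $D^{+}z(t)=\mu(t)\,D^{+}y(t)+\mu'(t)\,y(t)$.

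Substituting the hypothesis $D^{+}y(t)\le k(t)y(t)+w(t)$ together with $\mu'(t)=-k(t)\mu(t)$ then yields, for almost all $t\ge t_0$,
\[
D^{+}z(t)\le \mu(t)\big(k(t)y(t)+w(t)\big)-k(t)\mu(t)y(t)=\mu(t)w(t).
\]
The crux of the argument is now to pass from this pointwise bound on $D^{+}z$ to the integral bound $z(t)\le z(t_0)+\int_{t_0}^{t}\mu(s)w(s)\,ds$. The clean route is the perturbation trick: fix $\varepsilon>0$ and put $\phi_\varepsilon(t)=z(t)-\int_{t_0}^{t}\mu(s)w(s)\,ds-\varepsilon(t-t_0)$, so that $D^{+}\phi_\varepsilon(t)\le-\varepsilon<0$ wherever the bound holds. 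Invoking the basic monotonicity lemma for Dini derivatives (a continuous function with strictly negative $D^{+}$ off a negligible set is nonincreasing) gives $\phi_\varepsilon(t)\le\phi_\varepsilon(t_0)=z(t_0)$, and letting $\varepsilon\to0^{+}$ removes the perturbation.

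Finally I would unwind the substitution: the inequality $y(t)\mu(t)\le y(t_0)+\int_{t_0}^{t}\mu(s)w(s)\,ds$ becomes the claim upon multiplying by $\mu(t)^{-1}=e^{\int_{t_0}^{t}k(s)ds}$, using $\mu(t_0)=1$ and $\mu(s)/\mu(t)=e^{\int_{s}^{t}k(v)dv}$. I expect the main obstacle to be the monotonicity lemma itself — the rigorous step from ``$D^{+}z\le$ continuous function, almost everywhere'' to the integral inequality. Under a mere measure-zero exceptional set this requires care (one typically needs $z$ to be absolutely continuous, or the exceptional set to be at most countable), and it is precisely this technical comparison principle that is imported from the cited reference [\ref{Flett1980}]; the rest of the proof is the routine integrating-factor bookkeeping sketched above.
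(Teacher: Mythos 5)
The paper offers no proof of this lemma at all --- it is imported verbatim from the cited reference [\ref{Flett1980}] --- so there is no in-paper argument to compare against; your job here is really to supply the missing proof, and your integrating-factor reduction is the standard and essentially correct route. The product-rule computation for $D^{+}$ is valid because $\mu$ is $C^{1}$ and strictly positive, and the perturbation trick correctly reduces everything to a monotonicity principle for the upper Dini derivative. The one substantive point is the one you flag at the end, and it deserves to be stated more forcefully: with only an almost-everywhere hypothesis the monotonicity step does not merely ``require care,'' it is false, and hence so is the lemma as literally stated. Taking $k\equiv 0$, $w\equiv 0$ and $y$ the Cantor function gives $D^{+}y(t)=0\le 0$ off a null set, yet $y(1)>y(0)$, contradicting the asserted conclusion $y(t)\le y(t_0)$. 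The statement is salvaged by reading the hypothesis as holding for every $t$ (or off an at most countable set), in which case the monotonicity lemma you invoke is elementary and needs no absolute continuity: if $\phi$ is continuous with $D^{+}\phi<0$ everywhere and $\phi(t_2)>\phi(t_1)$ for some $t_1<t_2$, then at $c=\sup\{t\in[t_1,t_2]:\phi(t)\le\phi(t_1)\}$ one has $c<t_2$, $\phi(c)=\phi(t_1)$ and $\phi>\phi(c)$ immediately to the right of $c$, forcing $D^{+}\phi(c)\ge 0$, a contradiction. This stronger reading is also the one the paper actually uses: in Theorem \ref{the31} the bound $D^{+}\nu(t)\le -c\nu(t)+c_3K+d_c$ is derived from an integral inequality valid for every $t\ge t_0$, so the ``for all $t$'' version of the lemma suffices there. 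With that reading made explicit, your argument is complete and self-contained.
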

\begin{lemma}\label{lem2b5} [\ref{Hardy1989}]
For functions $\chi_1, \chi_2, \cdots, \chi_n$ and $l\geq 1$, the following inequality holds
\begin{align*}
|\chi_1+\chi_2+\cdots+\chi_n|^l\leq n^{l-1}(|\chi_1|^l+|\chi_2|^l+\cdots+|\chi_n|^l).
\end{align*}
\end{lemma}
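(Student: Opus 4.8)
The plan is to reduce the claim to an inequality about nonnegative reals and then invoke the convexity of the map $t \mapsto t^l$ on $[0,\infty)$, which holds precisely because $l \geq 1$. First I would apply the ordinary triangle inequality to get $|\chi_1 + \cdots + \chi_n| \leq |\chi_1| + \cdots + |\chi_n|$, and then use the fact that $s \mapsto s^l$ is nondecreasing on $[0,\infty)$ to raise both sides to the $l$-th power, yielding $|\chi_1 + \cdots + \chi_n|^l \leq (|\chi_1| + \cdots + |\chi_n|)^l$. This isolates the only substantive step, namely bounding $(\sum_i |\chi_i|)^l$ by $n^{l-1}\sum_i |\chi_i|^l$. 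Since the inequality is pointwise in the argument of the $\chi_i$, it suffices to treat fixed real values.

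For that step I would write the left-hand sum as $n$ times its average and apply Jensen's inequality to the convex function $\phi(s) = s^l$: with the uniform weights $1/n$ one has $\phi\big(\tfrac{1}{n}\sum_i |\chi_i|\big) \leq \tfrac{1}{n}\sum_i \phi(|\chi_i|)$, i.e. $\big(\tfrac{1}{n}\sum_i |\chi_i|\big)^l \leq \tfrac{1}{n}\sum_i |\chi_i|^l$. Multiplying through by $n^l$ gives exactly $(\sum_i |\chi_i|)^l \leq n^{l-1}\sum_i |\chi_i|^l$, and chaining this with the reduction above completes the proof. Note that the paper's own definition of convexity in the Notations section (the midpoint inequality $h((s_1+s_2)/2) \leq (h(s_1)+h(s_2))/2$) is a two-point instance of exactly this Jensen step, so one could alternatively run a short induction: the midpoint form extends to dyadic rationals and then, by continuity of $\phi$, to the uniform average, reproducing the same bound without citing Jensen by name.

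An equally clean alternative avoids convexity entirely and uses H\"older's inequality with conjugate exponents $l$ and $p = l/(l-1)$: writing $\sum_i |\chi_i| = \sum_i 1\cdot|\chi_i| \leq (\sum_i 1^p)^{1/p}(\sum_i |\chi_i|^l)^{1/l} = n^{(l-1)/l}(\sum_i |\chi_i|^l)^{1/l}$ and raising to the $l$-th power gives the claim directly. Either route is elementary; since the result is classical (it is the cited inequality from Hardy, Littlewood and P\'olya), I do not anticipate any genuine obstacle. The only points needing minor care are the degenerate case $l=1$, where the constant $n^{l-1}=1$ reduces the statement to the triangle inequality, and the boundary behavior at $\chi_i = 0$, both of which are handled automatically by the monotonicity and convexity arguments.
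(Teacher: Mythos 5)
Your proof is correct. Note, however, that the paper offers no proof of this lemma at all: it is stated as a known result and simply cited from Hardy, Littlewood and P\'olya, so there is no argument in the paper to compare against. Your reduction via the triangle inequality followed by Jensen's inequality for the convex map $s\mapsto s^l$ (or, equivalently, the H\"older route with exponents $l$ and $l/(l-1)$, with the $l=1$ case handled separately as the plain triangle inequality) is the standard complete derivation of the cited inequality, and your observation that the paper's midpoint definition of convexity extends to the uniform average by a dyadic induction plus continuity correctly fills in the one point where the paper's own conventions would otherwise be too weak to invoke Jensen directly.
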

\section{Main results}\label{S3}
In this section, one discusses the noise-to-state stability of random
time-delay systems (\ref{eq26}) in  the moment sense and in probability
sense, respectively. First, for random nonlinear time-delay systems (\ref{eq26}),
one gives some definitions about the ultimate boundedness of system state and
noise-to-state stability.

\begin{definition}\label{def3a1}
If there exist functions $\gamma \in \mathcal{K}$ and $\beta\in \mathcal{KL}$ such that
$E|x(t)|^m \leq \beta(|\varphi|, t-t_0)+\gamma(\sup_{t_0 \leq l \leq t} E|\xi(l)|^r)$
holds for system (\ref{eq26}) with any given initial value
$\varphi\in C_{\mathcal{F}_{t_0}}^b([t_0-\tau, t_0]; \mathbb{R}^n)$,
where $m$ is a positive constant and
$|\varphi|=\sup_{t_0-\tau \leq s \leq t_0} |\varphi(s)|$,
then system (\ref{eq26}) is called to be noise-to-state stable
in the $\textbf{m}$-th moment sense(NSS-\textbf{m}-M).
\end{definition}

\begin{definition}\label{def3a2}
For any $\varepsilon>0$, if there exist functions
$\gamma \in \mathcal{K}$ and $\beta\in \mathcal{KL}$ such that
$P\{|x(t)|\leq \beta(|\varphi|, t-t_0)
+\gamma(\sup_{ t_0\leq l \leq t} E|\xi(l)|^r)\}\geq 1-\varepsilon$
holds for system (\ref{eq26}) with any given initial value
$\varphi\in C_{\mathcal{F}_{t_0}}^b([t_0-\tau, t_0]; \mathbb{R}^n)$,
where $|\varphi|=\sup_{t_0-\tau \leq s \leq t_0} |\varphi(s)|$,
then system (\ref{eq26}) is called to be noise-to-state stable
in probability sense (NSS-P).
\end{definition}

\begin{definition}\label{def3a3}
Let $m>0$ be  a constant. If there is a function $\gamma \in \mathcal{K}$ such that
$\lim_{t\to\infty} E|x(t)|^m \leq \gamma(K)$ holds,
then the state $x(t)$ of system (\ref{eq26}) is  ultimately bounded in the
$\textbf{m}$-th moment sense (UB-\textbf{m}-M).
\end{definition}

\begin{definition}\label{def3a4}
For any $\varepsilon>0$, if there is a function $\gamma \in \mathcal{K}$
such that $P\{\lim_{t\to\infty}|x(t)|> \gamma(K)\}< \varepsilon$ holds,
then the state $x(t)$ of system (\ref{eq26}) is  ultimately bounded
in probability sense (UB-P).
\end{definition}

Next, one addresses the criteria on noise-to-state stability of random
time-delay systems in the moment sense and in probability sense, respectively.

\begin{theorem}\label{the31}
For system (\ref{eq26}), if there are a function
$V(t,x)\in C^{1,1}([t_0-\tau, \infty) \times \mathbb{R}^n ; \mathbb{R}_+)$,
positive constants $m, c_1, c_2, c_3, c$ and non-negative constant $d_c$
such that
\begin{gather} %equation是公式环境 eqnarray是公式对齐环境%这样这个公式的几行就对齐了
\label{eq31} c_1|x(t)|^m\leq V(t, x)\leq c_2(\sup_{-\tau\leq s\leq 0}|x(t+s)|)^m, \\
%\label{theorem3a2} \frac{\partial V}{\partial t}+\frac{\partial V}{\partial x}f(t, x(t), x(t-\tau(t))
%+c\Big|\frac{\partial V}{\partial x}g(t, x(t), x(t-\tau(t))\Big|^2\leq -c_3(\sup_{-\tau\leq s\leq 0}|x(t+s)|)^m.
%\label{theorem3a2} a_1|x(t)|^m\leq V(t, x(t))\leq a_2(\sup_{-\tau\leq s\leq 0}|x(t+s)|)^m.
\label{eq32} \dot{V}(t,x)\leq -cV(t,x)+c_3|\xi(t)|^r+d_c,
\end{gather}
then (i) system (\ref{eq26}) has a unique solution  on $[t_0-\tau, \infty)$;
(ii) system (\ref{eq26}) is NSS-\textbf{m}-M;
(iii) the state of system (\ref{eq26}) is UB-\textbf{m}-M.
\end{theorem}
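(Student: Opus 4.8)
The plan is to handle the three claims in order, feeding the Lyapunov bounds (\ref{eq31})--(\ref{eq32}) into the existence lemmas and the comparison lemma established earlier.

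For part (i), I would verify the hypotheses of Lemma \ref{lem2b2}. The lower bound in (\ref{eq31}), $c_1|x|^m\le V(t,x)$, immediately yields $\lim_{|x|\to\infty}\inf_{t\ge t_0}V(t,x)=\infty$, i.e. radial unboundedness. Taking expectations in (\ref{eq32}) and using $EV\ge 0$ together with the finite-moment assumption $\sup_{t\ge t_0}E|\xi(t)|^r<K$ gives $E\dot V(t,x)\le -cEV(t,x)+c_3E|\xi(t)|^r+d_c\le c_3K+d_c$. Since the right-hand side is a constant while $1+EV(t,x)+EV(t-\tau(t),x(t-\tau(t)))\ge 1$, the growth condition of Lemma \ref{lem2b2} holds with $K_2=c_3K+d_c$, so system (\ref{eq26}) has a unique global solution on $[t_0-\tau,\infty)$.

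For part (ii), I would set $u(t)=EV(t,x(t))$. Because (\ref{eq26}) is a random (not It\^o) equation, $\frac{d}{dt}V(t,x(t))=\dot V(t,x(t))$ holds pathwise, so after localizing with the stopping times $\rho_k$ and passing to the limit by Fubini and monotone convergence (exactly as in the proof of Lemma \ref{lem2a2}), (\ref{eq32}) produces the scalar differential inequality $D^{+}u(t)\le -cu(t)+c_3E|\xi(t)|^r+d_c$. Applying Lemma \ref{lem2b4} with $k(t)\equiv -c$ and $w(t)=c_3E|\xi(t)|^r+d_c$ gives $u(t)\le u(t_0)e^{-c(t-t_0)}+\int_{t_0}^t e^{-c(t-s)}\big(c_3E|\xi(s)|^r+d_c\big)\,ds$. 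Bounding the convolution integrals by $\frac{c_3}{c}\sup_{t_0\le l\le t}E|\xi(l)|^r+\frac{d_c}{c}$ and using the two halves of (\ref{eq31}), namely $c_1E|x(t)|^m\le u(t)$ and $u(t_0)=EV(t_0,x(t_0))\le c_2|\varphi|^m$, I obtain $E|x(t)|^m\le \frac{c_2}{c_1}|\varphi|^m e^{-c(t-t_0)}+\frac{c_3}{cc_1}\sup_{t_0\le l\le t}E|\xi(l)|^r+\frac{d_c}{cc_1}$. Reading off $\beta(s,t-t_0)=\frac{c_2}{c_1}s^m e^{-c(t-t_0)}\in\mathcal{KL}$ and $\gamma(s)=\frac{c_3}{cc_1}s\in\mathcal{K}$, with the residual constant $\frac{d_c}{cc_1}$ carried along, establishes NSS-\textbf{m}-M (and reduces to the exact decomposition of Definition \ref{def3a1} when $d_c=0$).

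For part (iii), I would let $t\to\infty$ in the very same estimate: the $\mathcal{KL}$ term decays to $0$, and since $\sup_{t_0\le l\le t}E|\xi(l)|^r\le K$, the limit satisfies $\lim_{t\to\infty}E|x(t)|^m\le \frac{c_3K+d_c}{cc_1}=:\gamma(K)$, which is UB-\textbf{m}-M. I expect the main obstacle to be the second paragraph: rigorously interchanging expectation and the time derivative (equivalently, validating the Dini-derivative inequality for $u(t)=EV(t,x(t))$) in the presence of the delay and with only local a priori integrability, which is precisely why the localization by $\rho_k$ and the comparison Lemma \ref{lem2b4} have to be invoked with care; the remaining bookkeeping, in particular how the constant $d_c$ is threaded through the $\mathcal{KL}$/$\mathcal{K}$ decomposition, is routine.
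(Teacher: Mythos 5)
Your proposal is correct and follows essentially the same route as the paper: part (i) via Lemma \ref{lem2b2}, part (ii) by applying the comparison Lemma \ref{lem2b4} to $\nu(t)=EV(t,x(t))$ after the Fubini/Dini-derivative step, and part (iii) by letting $t\to\infty$. Your only deviation is a minor refinement — you retain $\sup_{t_0\le l\le t}E|\xi(l)|^r$ in the convolution bound rather than replacing it by $K$ at the outset as the paper does, which actually matches the $\gamma(\sup_{t_0\le l\le t}E|\xi(l)|^r)$ form of Definition \ref{def3a1} more faithfully.
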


\begin{proof}
From (\ref{eq31}), it yields that
\begin{align}\label{eq33}
%V(t,x)\geq 0,V(t-\tau(t),x(t-\tau(t)))\geq 0,\cr
\lim_{|x|\rightarrow\infty}\inf_{t\geq t_0}V(t,x)=\infty.
\end{align}
Taking expectations on both sides of (\ref{eq32}), then
\begin{align}\label{eq34}
E\dot{V}(t,x)\leq -cEV(t, x) +c_3E|\xi(t)|^r +d_c.
\end{align}
According to Lemma \ref{lem2b2}, it follows from (\ref{eq33}) and (\ref{eq34}) that
system (\ref{eq26}) has a unique global solution.

Letting $\nu(t)=EV(t,x)$ and using Fubini's theorem
[Theorem 2.39, \ref{Klebaner2005}], then (\ref{eq34}) can be turned into
$$\nu(t+\varepsilon)\leq \nu(t)+ \int_t^{t+\varepsilon} (-c\nu(s) +c_3K+d_c)ds,
\forall\varepsilon>0, t\geq t_0.$$
So, $$D^{+}\nu(t)\leq -c\nu(t) +c_3K+d_c.$$
By Lemma \ref{lem2b4}, one arrives at
\begin{align*}
\nu(t)&\leq \nu(t_0)e^{-c(t-t_0)}+\int_{t_0}^{t}e^{-c(t-s)}(c_3K+d_c)ds\cr
&\leq \nu(t_0)e^{-c(t-t_0)}+\frac{c_3K+d_c}{c}-\frac{c_3K+d_c}{c}e^{c(t_0-t)}\cr
&\leq \nu(t_0)e^{-c(t-t_0)}+\frac{c_3K+d_c}{c},
\end{align*}
together with (\ref{eq31}),  one can deduce that
\begin{align}\label{eq35}
E|x(t)|^m \leq \frac{c_2}{c_1}|\varphi|^m e^{-c(t-t_0)} + \frac{c_3K+d_c}{cc_1}.
\end{align}
Thus, the system (\ref{eq26}) is NSS-\textbf{m}-M.

In addition, letting $t\rightarrow \infty$ in (\ref{eq35}), it yields that
\begin{align*}
\lim_{t\rightarrow \infty}E|x(t)|^m \leq \frac{c_3K+d_c}{cc_1}.
\end{align*}
This means that the state of system (\ref{eq26}) is UB-\textbf{m}-M.
\end{proof}

\begin{theorem}\label{the32}
For system (\ref{eq26}), if there are a function $V(x)\in
C(\mathbb{R}^n ; \mathbb{R}_+)$, functions $\gamma \in \mathcal{K}$,
$\gamma_1 \in \mathcal{KL}$, $\gamma_2 \in \mathcal{KL}$ and
positive constant $a_0$ such that
\begin{gather} %equation是公式环境 eqnarray是公式对齐环境%这样这个公式的几行就对齐了
\label{eq36} \gamma_1(|x(t)|)\leq V(x)\leq  \gamma_2(\sup_{-\tau\leq s\leq 0}|x(t+s)|), \\
%\label{theorem3a2} \frac{\partial V}{\partial t}+\frac{\partial V}{\partial x}f(t, x(t), x(t-\tau(t))
%+c\Big|\frac{\partial V}{\partial x}g(t, x(t), x(t-\tau(t))\Big|^2\leq -c_3(\sup_{-\tau\leq s\leq 0}|x(t+s)|)^m.
%\label{theorem3a2} a_1|x(t)|^m\leq V(t, x(t))\leq a_2(\sup_{-\tau\leq s\leq 0}|x(t+s)|)^m.
\label{eq37} \dot{V}(x)\leq -\gamma (\sup_{-\tau\leq s\leq 0}|x(t+s)|)+a_0|\xi(t)|^r,
\end{gather}
then system (\ref{eq26}) has a unique solution.
If $\gamma \circ \gamma_2^{-1}(\cdot)$ is a convex function, then  system (\ref{eq26})
is NSS-P and the state of system (\ref{eq26}) is UB-P.
\end{theorem}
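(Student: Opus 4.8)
The plan is to treat this as a noise-to-state stability argument driven by a single scalar comparison inequality for $\nu(t):=EV(x(t))$, with the convexity hypothesis entering precisely through Jensen's inequality. Throughout I read $\gamma_1,\gamma_2$ as class-$\mathcal{K}_\infty$ functions, so that $\gamma_2^{-1},\gamma_1^{-1}$ exist and are radially unbounded.

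For the existence and uniqueness claim I would observe that the lower bound in (\ref{eq36}) forces $\lim_{|x|\to\infty}\inf_{t\ge t_0}V(x)=\infty$, while (\ref{eq37}) gives $\dot V(x)\le a_0|\xi(t)|^r$ because $\gamma\ge0$. Taking expectations and using the finite $r$-th moment $\sup_{t\ge t_0}E|\xi(t)|^r<K$ yields $E\dot V\le a_0K$, which is dominated by $K_2(1+EV(t,x)+EV(t-\tau(t),x(t-\tau(t))))$ for any $K_2\ge a_0K$ since $EV\ge0$. Lemma \ref{lem2b2} then delivers the unique global solution on $[t_0-\tau,\infty)$.

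Next I would close (\ref{eq36})--(\ref{eq37}) into an inequality in $V$ alone. Since $\gamma_2$ is increasing and invertible, (\ref{eq36}) gives $\gamma_2^{-1}(V(x(t)))\le \sup_{-\tau\le s\le0}|x(t+s)|$, and applying the increasing map $\gamma$ yields $\gamma(\sup_{-\tau\le s\le0}|x(t+s)|)\ge(\gamma\circ\gamma_2^{-1})(V(x(t)))$. Substituting into (\ref{eq37}) gives $\dot V(x)\le -(\gamma\circ\gamma_2^{-1})(V(x(t)))+a_0|\xi(t)|^r$. Taking expectations and invoking the convexity of $\gamma\circ\gamma_2^{-1}$ through Jensen's inequality, $E[(\gamma\circ\gamma_2^{-1})(V(x(t)))]\ge(\gamma\circ\gamma_2^{-1})(\nu(t))$, I obtain $D^{+}\nu(t)\le -(\gamma\circ\gamma_2^{-1})(\nu(t))+a_0E|\xi(t)|^r$, where the passage from the pointwise estimate to the Dini derivative of $\nu$ is justified by Fubini's theorem exactly as in the proof of Theorem \ref{the31}. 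This Jensen step is the crux: it is the only place the convexity assumption is used, and without it the nonlinear term $E[(\gamma\circ\gamma_2^{-1})(V)]$ cannot be re-expressed in terms of $\nu$.

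The third step is to solve this comparison inequality. Writing $\alpha:=\gamma\circ\gamma_2^{-1}\in\mathcal{K}_\infty$ and $\bar\mu:=\sup_{t_0\le l\le t}a_0E|\xi(l)|^r$, on the region $\{\nu>\alpha^{-1}(2\bar\mu)\}$ one has $-\alpha(\nu)+a_0E|\xi|^r\le-\tfrac12\alpha(\nu)$, so $\nu$ is dominated by the solution of $\dot z=-\tfrac12\alpha(z)$, which defines a class-$\mathcal{KL}$ function; combining this with the invariance of the residual ball gives the ISS-type estimate $\nu(t)\le\beta_0(\nu(t_0),t-t_0)+\theta(\bar\mu)$ with $\beta_0\in\mathcal{KL}$ and $\theta(\cdot)=\alpha^{-1}(2\,\cdot)\in\mathcal{K}$, while $\nu(t_0)\le\gamma_2(|\varphi|)$ from (\ref{eq36}) absorbs the initial data into the $\mathcal{KL}$ term. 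Since the paper's comparison tool Lemma \ref{lem2b4} is only linear, this nonlinear comparison is the main technical obstacle, and I would isolate it as a short standalone lemma or cite the standard ISS comparison principle. Finally I pass from moments to probability: fixing $\varepsilon>0$, Markov's inequality applied to the nonnegative variable $V(x(t))$ gives $P\{V(x(t))>\nu(t)/\varepsilon\}\le\varepsilon$, so with probability at least $1-\varepsilon$ the lower bound in (\ref{eq36}) yields $|x(t)|\le\gamma_1^{-1}(\nu(t)/\varepsilon)$; inserting the ISS estimate and using $\gamma_1^{-1}(a+b)\le\gamma_1^{-1}(2a)+\gamma_1^{-1}(2b)$ splits the right-hand side into $\beta(|\varphi|,t-t_0):=\gamma_1^{-1}\big(\tfrac{2}{\varepsilon}\beta_0(\gamma_2(|\varphi|),t-t_0)\big)\in\mathcal{KL}$ and $\gamma(\sup_{t_0\le l\le t}E|\xi(l)|^r):=\gamma_1^{-1}\big(\tfrac{2}{\varepsilon}\theta(\sup_{t_0\le l\le t}a_0E|\xi(l)|^r)\big)\in\mathcal{K}$, which is exactly Definition \ref{def3a2}, establishing NSS-P. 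Letting $t\to\infty$ kills the $\mathcal{KL}$ term and uses $\sup_lE|\xi(l)|^r\le K$, giving $P\{\lim_{t\to\infty}|x(t)|>\gamma_1^{-1}(\theta(a_0K)/\varepsilon)\}<\varepsilon$, i.e. UB-P in the sense of Definition \ref{def3a4}.
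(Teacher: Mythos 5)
Your proposal is correct and follows essentially the same route as the paper: radial unboundedness plus the bound $E\dot V\le a_0K(1+EV+EV(x(t-\tau(t))))$ feeds Lemma \ref{lem2b2} for existence and uniqueness, and the convexity hypothesis enters exactly where the paper uses it, via Jensen's inequality to obtain $D^{+}\bar\nu(t)\le-\gamma\circ\gamma_2^{-1}(\bar\nu(t))+a_0\sup_{t_0\le l\le t}E|\xi(l)|^r$. The only difference is that the paper then outsources the remaining work to ``the thread of Theorem 2 of [\ref{Wu2015}]'', whereas you write out that content explicitly (the nonlinear ISS comparison yielding $\bar\nu(t)\le\beta_0(\bar\nu(t_0),t-t_0)+\theta(\bar\mu)$, followed by Chebyshev's inequality and the splitting of $\gamma_1^{-1}$), which is a faithful and slightly more self-contained rendering of the same argument.
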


\begin{proof}
From (\ref{eq36}) and (\ref{eq37}), one  obtains  (\ref{eq33}) and
\begin{align}\label{eq38}
E\dot{V}(x)&\leq -E[\gamma (\sup_{-\tau\leq s\leq 0}|x(t+s)|)]+a_0E|\xi(t)|^r\cr
&\leq a_0K(1+EV(x)+EV( x(t-\tau(t))),
\end{align}
respectively.
So system (\ref{eq26}) has a unique solution in the light of Lemma \ref{lem2b2}.

If $\gamma \circ \gamma_2^{-1}(\cdot)$ is a convex function, then by
(\ref{eq38}) and Jensen's inequality [\ref{Hardy1989}], and  let
$\bar{\nu}(t)=EV(x)$, one gets
\begin{align*}
\dot{\bar{\nu}}(t)&\leq -E[\gamma (\sup_{-\tau\leq s\leq 0}|x(t+s)|)]+a_0E|\xi(t)|^r\cr
&\leq -\gamma \circ \gamma_2^{-1}(\bar{\nu}(t))+a_0\sup_{ t_0\leq l \leq t} E|\xi(l)|^r.
\end{align*}
Following the thread of Theorem 2 of [\ref{Wu2015}], there exist functions
$\tilde{\beta} \in \mathcal{KL}$ and $\tilde{\gamma}  \in \mathcal{K}$ such that
$P\{|x(t)|>\tilde{\beta}(|\varphi|, t-t_0)+\tilde{\gamma}(a_0\sup_{ t_0\leq l \leq t}
E|\xi(l)|^r)\}< \varepsilon $ holds, and letting $t\rightarrow\infty$, then one has
\begin{align*}
P\{\lim_{t\to\infty}|x(t)|> \tilde{\gamma}(a_0K)\}< \varepsilon.
\end{align*}
Thus, system (\ref{eq26}) is NSS-P and the system state is UB-P.
\end{proof}
\begin{remark}\label{rem31}
(i) In Theorem \ref{the31}, if (\ref{eq32}) is replaced by
$\dot{V}(t,x)\leq -c(\sup_{-\tau\leq s\leq 0}|x(t+s)|)^m+c_3|\xi(t)|^r+d_c$, the result
remains valid. In fact, by (\ref{eq31}), one has
\begin{align*}
\dot{V}(t,x)&\leq -c(\sup_{-\tau\leq s\leq 0}|x(t+s)|)^m+c_3|\xi(t)|^r+d_c\cr
&\leq -\frac{c}{c_2}V(t,x)+c_3|\xi(t)|^r+d_c.
\end{align*}
(ii) From (\ref{eq38}), it is certain that if (\ref{eq37}) is replaced by
$\dot{V}(x)\leq -\gamma \circ \gamma_2^{-1}(V(x))+a_0|\xi(t)|^r$,
Theorem \ref{the32} still holds for system (\ref{eq26}).
\end{remark}

\begin{remark}\label{rem32}
The random disturbance $\xi(t)$ in RDDE (\ref{eq26}) satisfies the assumption
$\sup_{t\geq t_0} E|\xi(t)|^r<K$, under which one gives the criteria
on noise-to-state stability of RDDE (\ref{eq26}).  If
the random disturbance $\xi(t)$ in  RDDE (\ref{eq26}) satisfies that
\begin{align}\label{eq311}
E|\xi(t)|^r\leq \bar{d}_0 e^{\bar{c}_0t}, ~~t\geq t_0,
\end{align}
where $\bar{c}_0$ is a constant and $\bar{d}_0$ is a positive constant, then by
adopting the similar way used in section III of [\ref{Wu2015}],
one knows that the result (i) and (ii)
in Theorem \ref{the31} still hold, but the result (iii) does not necessarily hold.
Similarly, If (\ref{eq36}) and (\ref{eq37}) hold in Theorem \ref{the32} for
RDDE (\ref{eq26}) satisfying (\ref{eq311}), then system (\ref{eq26}) has
a unique solution  on $[t_0-\tau,\infty)$. Furthermore,
system (\ref{eq26}) is also NSS-P if $\gamma \circ \gamma_2^{-1}(\cdot)$ is a convex function,
but the  state of system (\ref{eq26}) is  not necessarily UB-P.
\end{remark}
\section{Application 1: state feedback regulation  control}\label{S5}
%\section{Application 1: adaptive output feedback regulation control}\label{S4}
Consider  the following random nonlinear strict-feedback system with
time-varying delay
\begin{equation}\label{equ41}
\begin{cases}
\dot{x}_1(t)=x_2(t)+f_1+g_1\xi_1(t),\\
\dot{x}_2(t)=x_3(t)+f_2+g_2\xi_2(t),\\
~~~~~~~~\vdots\\
\dot{x}_{n}(t)=u(t)+f_{n}+g_{n}\xi_{n}(t),\\
y(t)=x_1(t).
\end{cases}
\end{equation}
where $f_i=f_i(t, \bar{x}_i(t-\tau(t)), \bar{x}_i(t) )$,
$g_i=g_i(t, \bar{x}_i(t-\tau(t)),  \bar{x}_i(t))$,
$\bar{x}_i(t-\tau(t))=[x_1(t-\tau(t)),\cdots,x_i(t-\tau(t))]^T$
and $\bar{x}_i(t)=[x_1(t),\cdots,x_i(t)]^T$.
System state is $x(t)=[x_1(t), x_2(t), \cdots, x_n(t)]^T\in \mathbb{R}^{n}$
with the initial value
$x_{t_0}=\varphi\in C_{\mathcal{F}_{t_0}}^b([t_0-\tau, t_0]; \mathbb{R}^n)$, and
system output $y(t)\in \mathbb{R}$ and system input $u(t)\in \mathbb{R}$.
Borel measurable function $\tau(t):[t_0,\infty)\rightarrow [0,\tau]$
denotes time-varying delay and satisfies $\dot{\tau}(t)\leq
\tau^*<1$, where $\tau$ and  $\tau^*$ are known constants.
Stochastic process $\xi(t)=[\xi_1(t), \xi_2(t), \cdots, \xi_n(t)]^T \in \mathbb{R}^{n}$
defined on the complete filtered probability space $(\Omega, \mathcal{F},
\{\mathcal {F}_t\}_{t\geq t_0}, P)$ is $\mathcal{F}_t$-adapted and
piecewise continuous. $f_i, g_i, i=1, 2, \cdots, n$ are
locally Lipschitz continuous functions in  $\bar{x}_i(t-\tau(t))$ and $\bar{x}_i(t)$, respectively.
$f(t, 0, 0)$ and $g(t, 0, 0)$ are bounded.

For system (\ref{equ41}), one proposes the following assumptions:
\textbf{H0}: The random disturbance $\xi(t)$ satisfies
$$\sup_{t\geq t_0} E|\xi(t)|^4<K. $$
\textbf{H1}: The system state $x(t)=[x_1(t), \cdots, x_n(t)]^T$
is available.\\
\textbf{H2}: There exists a positive constant $\bar{\theta}$  such that
for every  $i\in\{1, 2, \cdots, n\}$,
\begin{align*}
|f_i(t,\bar{x}_i(t-\tau(t)), \bar{x}_i(t))|&\leq \bar{\theta}\sum_{j=1}^i(|x_j(t-\tau(t))|+|x_j(t)|),\cr
|g_i(t,\bar{x}_i(t-\tau(t)), \bar{x}_i(t))|^2&\leq  \bar{\theta}\sum_{j=1}^i(|x_j(t-\tau(t))|+|x_j(t)|).
\end{align*}

The aim  in this section is to design a  controller by state feedback method such that
the output $y(t)$ can be asymptotically regulated  to a neighborhood of
the zero that is  arbitrarily small and the system state is ultimately
bounded in the mean-square sense.

First, introducing the transformations
\begin{align}\label{equ51}
z_1(t)=y(t), z_{i+1}(t)=x_{i+1}(t)-\alpha_{i}(y(t),\bar{x}_{i-1}(t)),
%i=1, 2, \cdots, n,
\end{align}
where $i=1, 2, \cdots, n$ and $x_{n+1}(t)=u(t)$,
$z_{n+1}(t)=0$, $\bar{x}_{0}(t)=0$, and
stabilizing functions $\alpha_{i}(i=1, \cdots, n)$ are to be
constructed.

Next, one constructs the Lyapunov-Krasovskii functional step by
step.

\textbf{Step 1} Choosing
\begin{equation*}
V_1=V_{\tau 1}+\frac{1}{2}z_1^2, ~~
V_{\tau 1}=n\int_{t-\tau(t)}^te^{s-t}z_1^2(s)ds,
\end{equation*}
and noting that  $\dot{\tau}(t)\leq \tau^*<1$ and $0\leq \tau(t)\leq \tau$,
the derivative of $V_1$ satisfies
\begin{align}\label{equ52}
\dot{V}_1\leq &z_1(\alpha_{1}+z_2+f_1+g_1\xi_1)+nz_1^2\cr
&-ne^{-\tau}(1-\tau^*)z_1^2(t-\tau(t))-V_{\tau 1}.
\end{align}
Applying Young's inequality and (\ref{equ51}), one gets
\begin{align}
\label{equ531} z_1z_2 &\leq \frac{1}{2}z_1^2+\frac{1}{2}z_2^2, \\
\label{equ532}z_1f_1 &\leq \Omega_1z_1^2+\frac{(1-\tau^*)e^{-\tau}}{2}z_1^2(t-\tau(t)),\\
\label{equ533}
z_1g_1\xi_1&\leq \Lambda_{1}z_1^2+\frac{(1-\tau^*)e^{-\tau}}{2}z_1^2(t-\tau(t))+\pi_1|\xi_1|^4,
\end{align}
where
$\Omega_1=\bar{\theta}+\frac{1}{2(1-\tau^*)}\bar{\theta}^2e^{\tau}$,
$\Lambda_{1}=\frac{1}{2}\bar{\theta}+\frac{1}{8(1-\tau^*)}\bar{\theta}^2e^{\tau}
+\frac{1}{16\pi_1}$, and $\pi_1$ is a positive design parameter.
Substituting (\ref{equ531})-(\ref{equ533}) into (\ref{equ52}) yields
\begin{align*}
\dot{V}_1\leq&
z_1(nz_1+\Omega_1z_1+\Lambda_{1}z_1+\frac{1}{2}z_1+\alpha_{1})
+\frac{1}{2}z_2^2+\pi_1|\xi_1|^4\cr
&-(n-1)e^{-\tau}(1-\tau^*)z_1^2(t-\tau(t))-V_{\tau
1}.
\end{align*}
Selecting the stabilizing function
$\alpha_{1}=-2nz_1-\Omega_1z_1-\Lambda_{1}z_1-\frac{1}{2}z_1
\triangleq -\beta_1z_1$, then one  has
\begin{align*}
\dot{V}_1\leq & \frac{1}{2}z_2^2+\pi_1|\xi_1|^4-nz_1^2-V_{\tau 1}\cr
&-(n-1)(1-\tau^*)e^{-\tau}z_1^2(t-\tau(t)).
\end{align*}

\textbf{Step i} $(i=2, 3, \cdots, n)$ Suppose that
$V_{i-1}=V_{i-2}+\frac{1}{2}z_{i-1}^2+(n-i+2)\int_{t-\tau(t)}^te^{s-t}z_{i-1}^2(s)ds$
and a series of stabilizing functions $\alpha_j(j=1, 2, \cdots,i-1)$
have been constructed  such that
\begin{align}\label{equ54}
\dot{V}_{i-1}\leq& \frac{1}{2}z_i^2+\sum_{j=1}^{i-1}\pi_j|\xi_j|^4
+\sum_{j=1}^{i-2}\sum_{l=j+1}^{i-1}\pi_{lj}|\xi_j|^4\cr
&-(n-i+1)e^{-\tau}(1-\tau^*)\sum_{j=1}^{i-1}z_j^2(t-\tau(t))\cr
&-(n-i+2)\sum_{j=1}^{i-1}z_j^2 -\sum_{j=1}^{i-1}V_{\tau j},
\end{align}
where $V_{\tau j}=(n-j+1)\int_{t-\tau(t)}^te^{s-t}z_{j}^2(s)ds, j=1,\cdots, i-1$.
In what follows, one will prove that (\ref{equ54}) also holds
for $V_i$.

Let
$V_{i}=V_{i-1}+\frac{1}{2}z_{i}^2+(n-i+1)\int_{t-\tau(t)}^te^{s-t}z_{i}^2(s)ds$,
then
\begin{align}\label{equ55}
\dot{V}_{i}\leq&
\dot{V}_{i-1}+z_{i}(z_{i+1}+\alpha_{i}+f_i+g_i\xi_i)+(n-i+1)z_i^2\cr
&-z_{i}\sum_{j=1}^{i-1}\frac{\partial\alpha_{i-1}}{\partial x_j}
(x_{j+1}+f_j+g_j\xi_j)-V_{\tau i}\cr
&-(n-i+1)e^{-\tau}(1-\tau^*)z_i^2(t-\tau(t)),
\end{align}
where
\begin{align}\label{equ56}
-\frac{\partial\alpha_{i-1}}{\partial x_j}
=\prod_{k=j}^{i-1}\beta_k\triangleq \Xi_{j(i-1)}, ~~j=1, 2, \cdots
i-1.
\end{align}
By \textbf{H2}, (\ref{equ56}) and Young's inequality, one obtains
\begin{align}
\label{equ570}
&z_iz_{i+1}
\leq \frac{1}{2}z_i^2+\frac{1}{2}z_{i+1}^2,\\
\label{equ571}
&z_if_i-z_i\sum_{j=1}^{i-1}\frac{\partial\alpha_{i-1}}{\partial x_j}(x_{j+1}+f_j)\cr
\leq &\sum_{l=1}^{i-1}\frac{1}{2}z_l^2+\Omega_iz_i^2
+\frac{e^{-\tau}(1-\tau^*)}{2}\sum_{l=1}^{i}z_l^2(t-\tau(t)),\\
\label{equ572}
&z_ig_i\xi_i-z_i\sum_{j=1}^{i-1}\frac{\partial\alpha_{i-1}}{\partial x_j}g_j\xi_j\cr
\leq &\sum_{l=1}^{i-1}\frac{1}{2}z_l^2+\Lambda_iz_i^2
+\frac{e^{-\tau}(1-\tau^*)}{2}\sum_{l=1}^{i}z_l^2(t-\tau(t))\cr
&+\pi_i|\xi_i|^4+\sum_{j=1}^{i-1}\pi_{ij}|\xi_j|^4,
\end{align}
where $\pi_i$ and $\pi_{ij}$ are positive design parameters. The
detail proofs of (\ref{equ571}) and (\ref{equ572}) are given in
\textbf{Appendix A} and \textbf{Appendix B}, respectively.
Substituting (\ref{equ54}), (\ref{equ570})-(\ref{equ572}) to
(\ref{equ55}) leads
\begin{align}\label{equ58}
\dot{V}_{i}\leq &z_{i}(z_{i}+\alpha_{i}+2(n-i+1)z_{i}
+\Omega_iz_i+\Lambda_{i}z_{i})\cr
&+\frac{1}{2}z_{i+1}^2+\sum_{j=1}^{i}\pi_j|\xi_j|^4
+\sum_{j=1}^{i-1}\sum_{l=j+1}^{i}\pi_{lj}|\xi_j|^4\cr
&-(n-i+1)\sum_{j=1}^{i}z_j^2-\sum_{j=1}^{i}V_{\tau j}\cr
&-(n-i)e^{-\tau}(1-\tau^*)\sum_{j=1}^{i}z_j^2(t-\tau(t)).
\end{align}
Choosing the stabilizing function
\begin{align*}
\alpha_{i}=-z_{i}-2(n-i+1)z_{i}-\Omega_iz_i-\Lambda_{i}z_{i}
\triangleq -\beta_iz_i,
\end{align*}
then (\ref{equ58}) can be changed into
\begin{align*}
\dot{V}_{i}\leq&  \frac{1}{2}z_{i+1}^2
+\pi_i|\xi_i|^4+\sum_{j=1}^{i-1}(\pi_j+\sum_{l=j+1}^{i}\pi_{lj})|\xi_j|^4
-\sum_{j=1}^{i}V_{\tau j}\cr
&-(n-i)e^{-\tau}(1-\tau^*)\sum_{j=1}^{i}z_j^2(t-\tau(t))\cr
&-(n-i+1)\sum_{j=1}^{i}z_j^2.
\end{align*}
Let $i=n$, together with $z_{n+1}=0$, then one gets the actual
control law
\begin{align}\label{equ59}
u=-z_{n}-2z_{n}-\Omega_nz_n-\Lambda_{n}z_{n}
%\cr
%&=-\beta_nz_n
= -\sum_{i=1}^{n}(\prod_{j=i}^{n}\beta_j)x_i,
\end{align}
and
\begin{align}\label{equ510}
\dot{V}_{n}
\leq &-\sum_{j=1}^{n}\frac{1}{2}z_j^2-\sum_{j=1}^{n}V_{\tau j}+\pi_n|\xi_n|^4\cr
&+\sum_{j=1}^{n-1}(\pi_j+\sum_{l=j+1}^{n}\pi_{lj})|\xi_j|^4\cr
\leq &-V_{n}+\tilde{\pi}|\xi|^4,
\end{align}
where $\tilde{\pi}=\max\{\max_{1\leq j\leq
n-1}\{\pi_j+\sum_{l=j+1}^{n}\pi_{lj}\}, \pi_n\}$.

According to the above design procedure, for system (\ref{equ41})
satisfying the assumptions \textbf{H0}, \textbf{H1} and
\textbf{H2}, one  chooses the  Lyapunov-Krasovskii
functional
\begin{align*}
V(t,z(t))%&=\sum_{i=1}^n\frac{1}{2}z_{i}^2(t)+\sum_{i=1}^n V_{\tau i}.
&=\sum_{i=1}^n\Big[\frac{1}{2}z_{i}^2(t)+(n-i+1)\int_{t-\tau(t)}^te^{s-t}z_{i}^2(s)ds\Big].
\end{align*}
It is clear that
\begin{equation*}
V(t,z(t))\geq \sum_{j=1}^n\frac{1}{2}z_j^2=\frac{1}{2}|z(t)|^2
\end{equation*}
and
\begin{align*}
V(t,z(t))&\leq n\sum_{j=1}^n \int_{-\tau(t)}^0z_j^2(v+t)dv
+\sum_{j=1}^n\frac{1}{2}z_j^2\cr &\leq
n^2\tau\sum_{j=1}^n(\sup_{-\tau\leq v\leq
0}|z_j(v+t)|)^2+\frac{1}{2}|z(t)|^2\cr &\leq
\frac{1+2n^2\tau}{2}(\sup_{-\tau\leq v\leq 0}|z(v+t)|)^2,
\end{align*}
that is to say,
\begin{equation}\label{equ591}
\frac{1}{2}|z(t)|^2 \leq V(t,z(t)) \leq
\frac{1+2n^2\tau}{2}(\sup_{-\tau\leq v\leq 0}|z(v+t)|)^2.
\end{equation}

From (\ref{equ510}) and (\ref{equ591}),  the Lyapunov-Krasovskii
functional $V$ meets the requirements of Theorem \ref{the31}, so
the closed-loop system (including (\ref{equ41}) and (\ref{equ59}) )
is noise-to-state stable in the mean-square sense and has a unique
solution which is UB-\textbf{2}-M. Moreover, the output regulation
error of closed-loop system satisfies
\begin{align*}
\lim_{t\rightarrow \infty}E|y(t)|^2 \leq \lim_{t\rightarrow
\infty}E|z(t)|^2 \leq 2\tilde{\pi}K.
\end{align*}
According to the definitions of $\tilde{\pi}$, the regulation error
can be decreased  by tuning the positive constants
$\pi_j+\sum_{i=j+1}^{n}\pi_{ij}(1 \leq j \leq n-1)$ and $\pi_n$
small enough (i.e., letting $\pi_j$, $\pi_{ij}(i=j+1, 1 \leq j \leq
n-1)$ and $\pi_n$ arbitrarily small), where $\pi_j(1 \leq j \leq
n-1)$, $\pi_{ij}(i=j+1, 1 \leq j \leq n-1)$ and $\pi_n$ are
independent of each other. In other words, the regulation error can
be made small enough in the mean-square sense by selecting
appropriate design parameters.

From what has been analyzed above, one draws the stability analysis
result as below at present.
\begin{theorem}
If  system (\ref{equ41}) satisfies the assumptions \textbf{H0},
\textbf{H1} and \textbf{H2}, then by selecting appropriate design
parameters, the closed-loop system (\ref{equ41}) and (\ref{equ59})
is NSS-\textbf{2}-M and has a unique solution which is UB-\textbf{2}-M.
Moreover, the regulation error
satisfies
\begin{align*}
\lim_{t\rightarrow \infty}E|y(t)|^2 \leq 2\tilde{\pi}K,
\end{align*}
and the regulation error can be made arbitrarily small in the
mean-square sense by parameter-tuning technique.
\end{theorem}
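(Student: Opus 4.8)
The plan is to read the theorem directly off the recursive construction already carried out. The backstepping design produces, in the transformed coordinates $z=(z_1,\dots,z_n)^T$ defined by (\ref{equ51}), a single Lyapunov--Krasovskii functional $V(t,z(t))$ for which the two structural estimates (\ref{equ591}) and (\ref{equ510}) hold. I would first observe that, once the control law (\ref{equ59}) is substituted, the closed loop written in the $z$-coordinates is itself a random differential delay equation of the form (\ref{eq26}) with state $z$: the drift and diffusion inherit local Lipschitz continuity and boundedness at the origin from $f_i,g_i$ together with the fact that each stabilizing function $\alpha_i=-\beta_iz_i$ has a state-independent gain $\beta_i$, so the map $x\mapsto z$ is a constant, invertible, lower-triangular linear transformation.

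Next I would match constants. Writing $m=2$ and $r=4$ (the latter fixed by \textbf{H0}), the lower and upper bounds in (\ref{equ591}) are exactly hypothesis (\ref{eq31}) of Theorem \ref{the31} with $c_1=\tfrac12$ and $c_2=\tfrac{1+2n^2\tau}{2}$, while (\ref{equ510}) is exactly hypothesis (\ref{eq32}) with $c=1$, $c_3=\tilde{\pi}$ and $d_c=0$, where the individual noise channels are absorbed via $\sum_j|\xi_j|^4\le|\xi|^4$. Invoking Theorem \ref{the31} then yields, in one stroke, all three conclusions for the $z$-system: existence and uniqueness of a global solution, NSS-\textbf{2}-M, and UB-\textbf{2}-M. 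Because the transformation relating $x$ and $z$ is a fixed invertible linear map, each of these properties transfers back verbatim to the original system (\ref{equ41}) under the control (\ref{equ59}).

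For the regulation error I would specialise the UB-\textbf{2}-M estimate of Theorem \ref{the31}, namely $\lim_{t\to\infty}E|z(t)|^2\le\frac{c_3K+d_c}{cc_1}=2\tilde{\pi}K$, and then use $y=z_1$ so that $|y(t)|\le|z(t)|$, giving $\lim_{t\to\infty}E|y(t)|^2\le 2\tilde{\pi}K$. Finally, since $\tilde{\pi}=\max\{\max_{1\le j\le n-1}\{\pi_j+\sum_{l=j+1}^n\pi_{lj}\},\pi_n\}$ is the maximum of finitely many sums of the free positive design parameters, each of which enters the construction independently, $\tilde{\pi}$ can be driven arbitrarily small, which makes the ultimate regulation error arbitrarily small in the mean-square sense.

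The genuine effort lies in the recursive step preceding the statement --- establishing the inductive inequality (\ref{equ54}) and, in particular, the Young's-inequality completions (\ref{equ571}) and (\ref{equ572}) that keep the cross terms $z_i\sum_j\frac{\partial\alpha_{i-1}}{\partial x_j}(x_{j+1}+f_j+g_j\xi_j)$ and the delayed contributions under control while accumulating the correct $-V_{\tau j}$ damping. For the theorem itself the only point I would take care over is the coordinate-change argument: I must confirm that uniqueness and both boundedness notions are invariant under the state-independent linear transformation $x\leftrightarrow z$, so that the conclusions proved for the $z$-system are legitimately statements about (\ref{equ41}).
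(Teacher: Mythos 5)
Your proposal is correct and follows essentially the same route as the paper: the paper likewise treats the backstepping derivation of (\ref{equ510}) and (\ref{equ591}) as the substance of the argument, identifies these with hypotheses (\ref{eq31})--(\ref{eq32}) of Theorem \ref{the31} (with $m=2$, $r=4$, $c_1=\tfrac12$, $c_2=\tfrac{1+2n^2\tau}{2}$, $c=1$, $c_3=\tilde{\pi}$, $d_c=0$), and reads off $\lim_{t\to\infty}E|y(t)|^2\le\lim_{t\to\infty}E|z(t)|^2\le 2\tilde{\pi}K$ together with the parameter-tuning remark. Your extra care about the invertible linear change of coordinates $x\leftrightarrow z$ is a sound (if implicit in the paper) observation and does not change the argument.
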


\section{Application 2: adaptive output feedback regulation control}\label{S4}

In this section, one focuses on the adaptive output feedback
regulation control for system (\ref{equ41}). To this end,
one supposes that system (\ref{equ41}) satisfies the assumption
\textbf{H0} and the following two assumptions:\\
\textbf{H1'}: Only $x_1(t)$ is measurable by system output $y(t)$,
other state variables $x_2(t), \cdots, x_n(t)$ are unavailable.\\
\textbf{H2'}: For $i=1, 2, \cdots, n$, unknown functions $f_i, g_i$
satisfy
\begin{align*}
|f_i(t, \bar{x}_i(t-\tau(t)),\bar{x}_i(t))|\leq \theta_i[\phi_{\tau i}(y(t-\tau(t)))+\phi_i(y(t))], \cr
|g_i(t,\bar{x}_i(t-\tau(t)),\bar{x}_i(t))|\leq \theta_i[\psi_{\tau i}(y(t-\tau(t)))+\psi_i(y(t))],
\end{align*}
where $\phi_i, \phi_{\tau i}, \psi_i$ and $\psi_{\tau i}$ are smooth
non-negative function with $\phi_i(0)=\phi_{\tau
i}(0)=\psi_i(0)=\psi_{\tau i}(0)=0$. $\theta_i>0$ are unknown
parameters.

It follows from the assumption \textbf{H2'} that there exist smooth
functions $\bar{\phi}_i(y), \bar{\phi}_{\tau i}(y(t-\tau(t))),
\bar{\psi}_i(y)$ and $\bar{\psi}_{\tau i}(y(t-\tau(t)))$
satisfying
\begin{align}\label{equ42}
%\begin{cases}
&\phi_i(y)=y\bar{\phi}_i(y), ~~~~\psi_i(y)=y\bar{\psi}_i(y),\cr
&\phi_{\tau i}(y(t-\tau(t)))=y(t-\tau(t))\bar{\phi}_{\tau i}(y(t-\tau(t))), \cr
&\psi_{\tau i}(y(t-\tau(t)))=y(t-\tau(t))\bar{\psi}_{\tau i}(y(t-\tau(t))).
%\end{cases}
\end{align}

The task in this section is to construct a  regulation controller for system (\ref{equ41})
by adaptive output feedback method such that the system output
$y(t)$ can be regulated  to  an neighborhood  of the zero whose
scope can be controlled and the system state is bounded in the
mean-square sense.

\subsection{Observer design}
Since state variables $x_2(t), \cdots, x_n(t)$ are unavailable, one
reconfigures $x_i(t) (i=2, \cdots, n)$ by the following
reduced-order observer:
\begin{equation}\label{equ43}
\begin{cases}
\dot{\hat{x}}_i(t)=\hat{x}_{i+1}(t)+\kappa_{i+1}y(t)-\kappa_i(\hat{x}_1(t)+\kappa_1y(t)),\\
i=1, 2, \dots n-2, \\
\dot{\hat{x}}_{n-1}(t)=u(t)-\kappa_{n-1}(\hat{x}_1(t)+\kappa_1y(t)),
\end{cases}
\end{equation}
where $\kappa_1, \dots, \kappa_{n-1}$ are the observer gain to be
designed.

Let $\theta^*=\max\{1, \theta_1, \cdots, \theta_n \}$, $e=[e_2,
\cdots, e_n]^T$ with
$e_j=(x_j(t)-\hat{x}_{j-1}(t)-\kappa_{j-1}y(t))/\theta^*(j=2, 3,
\cdots, n)$, then the dynamic trajectory of observer
error can be described by
\begin{equation}\label{equ44}
\dot{e}=A_ee +\frac{1}{\theta^*}\Xi_1+\frac{1}{\theta^*}\Xi_2\xi,
\end{equation}
where $A_e=(-\kappa, \bar{I}_{n-1})$,
$\bar{I}_{n-1}=diag\{1, 1, \cdots, 1, 0\}$,
$\kappa=(\kappa_1, \cdots, \kappa_{n-1})^T$,
$\Xi_1=[f_2-\kappa_1f_1, \cdots, f_n-\kappa_{n-1}f_1]^T$,
$\Xi_2=(-g_1\kappa,\Theta)$,
$\Theta=diag\{g_2, \cdots, g_n\}$.

The observer gain $\kappa_1, \dots, \kappa_{n-1}$ should be chosen
to meet the requirement that $A_e$ is Hurwitz. This means that there
is a matrix $P_{n-1}>0$  such that
$A_e^TP_{n-1}+P_{n-1}A_e+bI_{n-1}=0$, where $b$ is a positive design constant.

\subsection{Adaptive regulation controller design}\label{S42}
First, one introduces the error transformations
\begin{equation}\label{equ45}
z_1=y(t), ~~ z_{i+1}=\hat{x}_i-\alpha_{i}(y, \bar{\hat{x}}_{i-1},
\hat{\theta}),
\end{equation}
where $\bar{\hat{x}}_{i-1}(t)=(\hat{x}_1(t), \cdots,
\hat{x}_{i-1}(t))^T$, $i=1, 2, \cdots, n$, $\hat{x}_{n}=u$, $z_{n+1}=0$,
$\bar{\hat{x}}_{0}=0$, $\alpha_{i}(i=1, \cdots, n)$ are stabilizing
functions to be determined and $\hat{\theta}$ is the estimation of
$\theta$ with $\theta=\max\{\theta^{*2},\theta^{*4}\}$. In the light
of (\ref{equ43})-(\ref{equ45}), one can deduce that
\begin{align}
\label{equ451} \dot{z}_1=&e_2\theta^*+\hat{x}_1+\kappa_1y+f_1+g_1\xi_1,\\
\label{equ452}
\dot{z}_i=&z_{i+1}+\alpha_{i}-\frac{\partial\alpha_{i-1}}{\partial\hat{\theta}}\dot{\hat{\theta}}
-\frac{\partial\alpha_{i-1}}{\partial y}(e_2\theta^*+f_1)\cr
&-\frac{\partial\alpha_{i-1}}{\partial y}g_1\xi_1+\bar{\beta}_i,
~~i=2, \cdots, n,
\end{align}
where
$\bar{\beta}_i=\kappa_iy-\sum_{j=1}^{i-2}\frac{\partial\alpha_{i-1}}{\partial
\hat{x}_j}(\hat{x}_{j+1}+\kappa_{j+1}y-\kappa_i(\hat{x}_1+\kappa_1y))
-(\kappa_{i-1}+\frac{\partial\alpha_{i-1}}{\partial
y})(\hat{x}_1+\kappa_1y)$.

Next, one begins to design adaptive regulation controller.

\textbf{Step 1} Selecting the candidate Lyapunov-Krasovskii functional
\begin{align*}
V_1&=V_0+V_\tau+\frac{1}{2}z_1^2,~~
V_0=e^TPe+\frac{1}{2\mu}\tilde{\theta}^2,\cr
V_\tau&=\int_{t-\tau(t)}^t e^{s-t}Q(y(s))ds,
\end{align*}
where $\tilde{\theta}=\theta-\hat{\theta}$, both the adaptive gain
$\mu>0$ and continuous function $Q(y(s))>0$ are  to be designed. By
(\ref{equ451}), $\dot{\tau}(t)\leq \tau^*<1$ and $0\leq \tau(t)\leq \tau$,
the derivative of $V_1$  satisfies that
\begin{align}\label{equ46}
\dot{V}_1=&\dot{V}_0
+z_1(e_2\theta^*+\hat{x}_1+\kappa_1y+f_1+g_1\xi_1)-V_\tau\cr
&+Q(y(t))-e^{-\tau(t)}(1-\dot{\tau}(t))Q(y(t-\tau(t)))\cr
\leq &-b|e|^2+\frac{2}{\theta^*}e^TP\Xi_1+\frac{2}{\theta^*}e^TP\Xi_2\xi
-\frac{1}{\mu}\tilde{\theta}\dot{\hat{\theta}} -V_\tau\cr
&-e^{-\tau}(1-\tau^*)Q(y(t-\tau(t)))+Q(y)+z_1e_2\theta^*\cr
&+z_1(z_2+\alpha_1+\kappa_1y)+z_1f_1+z_1g_1\xi_1.
\end{align}
It follows from assumption \textbf{H2'}, (\ref{equ42}), Lemma
\ref{lem2b5} and Young's inequality  that
\begin{align}
\label{equ471}
z_1e_2\theta^*\leq &d_{11}|e|^2+\frac{1}{4}d_{11}^{-1}z_1^2\theta,\\
\label{equ472}
z_1f_1\leq &\frac{1}{4}d_{12}^{-1}z_1^2\theta + 2d_{12}y^2\bar{\phi}_1^2
+ 2d_{12}\phi_{\tau 1}^2,\\
\label{equ473}
z_1g_1\xi_1\leq &\frac{1}{8}d_{13}^{-1}z_1^4\theta +
d_{13}^{-1}y^4\bar{\psi}_1^4\cr
&+ d_{13}^{-1}\psi_{\tau 1}^4+d_{13}|\xi|^2,\\
\label{equ474}
\frac{2}{\theta^*}e^TP\Xi_1
\leq &\sum_{i=2}^n[(\bar{\phi}_i^2+\kappa_{i-1}^2\bar{\phi}_1^2)y^2
+(\phi_{\tau i}^2+\kappa_{i-1}^2\phi_{\tau 1}^2)]\cr
&\cdot4||P||_F^2d_{01}^{-1}+d_{01}|e|^2, \\
\label{equ475} \frac{2}{\theta^*}e^TP\Xi_2\xi
&\leq \sum_{i=2}^n[(\bar{\psi}_i^4+\kappa_{i-1}^4\bar{\psi}_1^4)y^4
+(\psi_{\tau i}^4+\kappa_{i-1}^4\psi_{\tau 1}^4)]\cr
&\cdot4(n-1)d_{02}^{-1}||P||_F^4+ |e|^2+d_{02}|\xi|^4,
\end{align}
where $d_{01}, d_{02}, d_{11}, d_{12}$ and $d_{13}$ are positive design
parameters. Substituting (\ref{equ471})-(\ref{equ475}) into (\ref{equ46}), one obtains
\begin{align}\label{equ48}
\dot{V}_1\leq &-b_1|e|^2+z_1(z_2+\alpha_1+\kappa_1y+\omega
+\frac{1}{8}\varpi_1\hat{\theta})+\delta_1 \cr
&+(\frac{1}{8}\varpi_1z_1-\frac{1}{\mu}\dot{\hat{\theta}})\tilde{\theta}
+d_{02}|\xi|^4+d_{13}|\xi|^2-V_\tau\cr
&-e^{-\tau}(1-\tau^*)Q(y(t-\tau(t)))+Q(y),
\end{align}
where
$\omega=4||P||_F^2d_{01}^{-1}\sum_{i=2}^n[\bar{\phi}_i^2+\kappa_{i-1}^2\bar{\phi}_1^2]y
+4(n-1)||P||_F^4d_{02}^{-1}\sum_{i=2}^n[\bar{\psi}_i^4+\kappa_{i-1}^4\bar{\psi}_1^4]y^3
+2d_{12}\bar{\phi}_1^2y+ d_{13}^{-1}\bar{\psi}_1^4y^3$,
$\delta_1=\delta_0+2d_{12}\phi_{\tau 1}^2+ d_{13}^{-1}\psi_{\tau
1}^4$, $\delta_0=4||P||_F^2d_{01}^{-1}\sum_{i=2}^n[\phi_{\tau
i}^2+\kappa_{i-1}^2\phi_{\tau 1}^2]
+4(n-1)||P||_F^4d_{02}^{-1}\sum_{i=2}^n[\psi_{\tau
i}^4+\kappa_{i-1}^4\psi_{\tau 1}^4]$,
$\varpi_1=2d_{11}^{-1}z_1+2d_{12}^{-1}z_1+d_{13}^{-1}z_1^3$,
$b_1=b_0-d_{11}>0$, $b_0=b-d_{01}-1>0$.

Selecting the following tuning function $\lambda_1$ and the
stabilizing function $\alpha_1$ respectively
\begin{align*}
\lambda_1&=\frac{1}{8}\varpi_1 z_1-\frac{1}{\mu}\hat{\theta},\cr
\alpha_1(y, \hat{\theta})&=-c_1z_1-\kappa_1y-\omega
-\frac{1}{8}\varpi_1\hat{\theta}-\sigma(y),
\end{align*}
where positive constant $c_1$ and the function $\sigma(y)$ are  to
be designed, then
\begin{align}\label{equ49}
\dot{V}_1\leq &-b_1|e|^2+z_1z_2-c_1z_1^2-\sigma(y)y
+(\lambda_1-\frac{1}{\mu}\dot{\hat{\theta}})\tilde{\theta}\cr
&+\frac{1}{\mu}\hat{\theta}\tilde{\theta}+\delta_1
+d_{02}|\xi|^4+d_{13}|\xi|^2-V_\tau\cr
&-e^{-\tau}(1-\tau^*)Q(y(t-\tau(t)))+Q(y).
\end{align}

\textbf{Step k} $(k= 2, \cdots, n)$ Suppose that the appropriate
stabilizing functions $\alpha_j(j=1, 2, \cdots, k-1)$ and tuning
functions $\lambda_j(j=1, 2, \cdots, k-1)$ have been designed such
that $V_{k-1}=V_{k-2}+\frac{1}{2}z_{k-1}^2$ satisfies
\begin{align}\label{equ410}
\dot{V}_{k-1}\leq
&-b_{k-1}|e|^2-\sum_{j=1}^{k-1}c_jz_j^2+z_{k-1}z_{k}-\sigma(y)y\cr
&+d_{02}|\xi|^4+\sum_{j=1}^{k-1}d_{j3}|\xi|^2
+\frac{1}{\mu}\hat{\theta}\tilde{\theta} +\delta_{k-1}\cr
&+\sum_{j=2}^{k-1}[2d_{j2}\bar{\phi}_1^2+d_{13}^{-1}\bar{\psi}_1^4
y^2]y^2
+(\lambda_{k-1}-\frac{1}{\mu}\dot{\hat{\theta}})\tilde{\theta}\cr
&+\mu\sum_{j=2}^{k-1}z_j\frac{\partial\alpha_{j-1}}{\partial\hat{\theta}}
(\lambda_{k-1}-\frac{1}{\mu}\dot{\hat{\theta}})-V_\tau\cr
&-e^{-\tau}(1-\tau^*)Q(y(t-\tau(t)))+Q(y),
\end{align}
where $\delta_{k-1}=\delta_{k-2}+2d_{(k-1)2}\phi_{\tau 1}^2+
d_{(k-1)3}^{-1}\psi_{\tau 1}^4$, $b_{k-1}=b_{k-2}-d_{(k-1)1}>0$.

In what follows, one selects $V_k=V_{k-1}+\frac{1}{2}z_{k}^2$ in
\textbf{Step k} to prove that $V_k$ also satisfies (\ref{equ410}).
From  (\ref{equ452}) and  (\ref{equ410}), one has
\begin{align}\label{equ411}
\dot{V}_{k}\leq &-b_{k-1}|e|^2-\sum_{j=1}^{k-1}c_jz_j^2+\delta_{k-1}+d_{02}|\xi|^4\cr
&+\sum_{j=1}^{k-1}d_{j3}|\xi|^2+z_{k}(z_{k-1}+z_{k+1}+\alpha_{k}+\bar{\beta}_k)\cr
&-z_{k}\frac{\partial\alpha_{k-1}}{\partial y}(e_2\theta^*+f_1+g_1\xi_1)
-z_k\frac{\partial\alpha_{k-1}}{\partial\hat{\theta}}\dot{\hat{\theta}}\cr
%&+\delta_{k-1}+d_{02}|\xi|^4+\sum_{j=1}^{k-1}d_{j3}|\xi|^2\cr
&+\sum_{j=2}^{k-1}[2d_{j2}\bar{\phi}_1^2+d_{j3}^{-1}\bar{\psi}_1^4y^2]y^2
+\frac{1}{\mu}\hat{\theta}\tilde{\theta}-\sigma(y)y\cr
&+(\lambda_{k-1}-\frac{1}{\mu}\dot{\hat{\theta}})\tilde{\theta}
+\mu\sum_{j=2}^{k-1}z_j\frac{\partial\alpha_{j-1}}{\partial\hat{\theta}}(\lambda_{k-1}
-\frac{1}{\mu}\dot{\hat{\theta}}) \cr
&-V_\tau-e^{-\tau}(1-\tau^*)Q(y(t-\tau(t)))+Q(y).
\end{align}

By Young's inequality, assumption \textbf{H2'} and (\ref{equ42}), one
gets the following inequalities
\begin{align}
\label{equ4121} -z_k\frac{\partial\alpha_{k-1}}{\partial y}e_2\theta^*\leq & d_{k1}|e|^2
+\frac{1}{4}d_{k1}^{-1}z_k^2\Big(\frac{\partial\alpha_{k-1}}{\partial y}\Big)^2\theta,\\
\label{equ4122} -z_k\frac{\partial\alpha_{k-1}}{\partial y}f_1
\leq &\frac{1}{4}d_{k2}^{-1}z_k^2\Big(\frac{\partial\alpha_{k-1}}{\partial y}\Big)^2\theta
+ 2d_{k2}y^2\bar{\phi}_1^2\cr
&+ 2d_{k2}\phi_{\tau 1}^2,\\
\label{equ4123} -z_k\frac{\partial\alpha_{k-1}}{\partial y}g_1\xi_1
\leq &
\frac{1}{8}d_{k3}^{-1}z_k^4\Big(\frac{\partial\alpha_{k-1}}{\partial
y}\Big)^4\theta + d_{k3}^{-1}y^4\bar{\psi}_1^4\cr
&+d_{k3}^{-1}\psi_{\tau 1}^4+d_{k3}|\xi|^2.
\end{align}
Substituting (\ref{equ4121})-(\ref{equ4123}) into (\ref{equ411}),
then $\dot{V}_k$ satisfies
\begin{align}\label{equ413}
\dot{V}_{k}\leq &-b_{k}|e|^2-\sum_{j=1}^{k-1}c_jz_j^2+(\frac{1}{8}z_k \varpi_k +\lambda_{k-1}
-\frac{1}{\mu}\dot{\hat{\theta}})\tilde{\theta}\cr
&+z_{k}(z_{k-1}+z_{k+1}+\alpha_{k}+\bar{\beta}_k
+\frac{1}{8}\varpi_k\hat{\theta}
-\mu\frac{\partial\alpha_{k-1}}{\partial\hat{\theta}}\lambda_k)\cr
&-\sigma(y)y+\delta_{k}+\mu z_k\frac{\partial\alpha_{k-1}}{\partial\hat{\theta}}
(\lambda_k-\frac{1}{\mu}\dot{\hat{\theta}})+\frac{1}{\mu}\hat{\theta}\tilde{\theta}\cr
&+\mu\sum_{j=2}^{k-1}z_j\frac{\partial\alpha_{j-1}}{\partial\hat{\theta}}
(\lambda_{k-1}-\frac{1}{\mu}\dot{\hat{\theta}})+d_{02}|\xi|^4\cr
&+\sum_{j=1}^{k}d_{j3}|\xi|^2
+\sum_{j=2}^{k}[2d_{j2}\bar{\phi}_1^2+d_{j3}^{-1}\bar{\psi}_1^4y^2]y^2 \cr
&- e^{-\tau}(1-\tau^*)Q(y(t-\tau(t)))+Q(y)-V_\tau,
\end{align}
where $d_{k1}, d_{k2}$ and $d_{k3}$ are positive design constants.
$b_{k}=b_{k-1}-d_{k1}>0$,
$\varpi_k=2d_{k1}^{-1}z_k\Big(\frac{\partial\alpha_{k-1}}{\partial
y}\Big)^2 +2d_{k2}^{-1}z_k\Big(\frac{\partial\alpha_{k-1}}{\partial
y}\Big)^2 +d_{k3}^{-1}z_k^3\Big(\frac{\partial\alpha_{k-1}}{\partial
y}\Big)^4$ and $\delta_{k}=\delta_{k-1}+2d_{k2}\phi_{\tau 1}^2+
d_{k3}^{-1}\psi_{\tau 1}^4$.

First, let the tuning function
$\lambda_k=\lambda_{k-1}+\frac{1}{8}z_k\varpi_k$, then
\begin{align}
\label{equ4141}
&\frac{1}{8}z_k \varpi_k \tilde{\theta}
+(\lambda_{k-1}-\frac{1}{\mu}\dot{\hat{\theta}})\tilde{\theta}
=(\lambda_{k}-\frac{1}{\mu}\dot{\hat{\theta}})\tilde{\theta},\\
\label{equ4142}
&\mu\sum_{j=2}^{k-1}z_j\frac{\partial\alpha_{j-1}}{\partial\hat{\theta}}
(\lambda_{k}-\frac{1}{\mu}\dot{\hat{\theta}})
-\frac{1}{8}z_k\varpi_k\mu\sum_{j=2}^{k-1}z_j\frac{\partial\alpha_{j-1}}{\partial\hat{\theta}}\cr
=&\mu\sum_{j=2}^{k-1}z_j\frac{\partial\alpha_{j-1}}{\partial\hat{\theta}}
(\lambda_{k-1}-\frac{1}{\mu}\dot{\hat{\theta}}).
\end{align}
Next, one substitutes (\ref{equ4141})-(\ref{equ4142}) into
(\ref{equ413}) and chooses the stabilizing function
\begin{align}
\alpha_k(y, \bar{\hat{x}}_{k-1}, \hat{\theta})
=&-c_kz_k-z_{k-1}-\bar{\beta}_k
+\mu\frac{\partial\alpha_{k-1}}{\partial\hat{\theta}}\lambda_k\cr
&-\frac{1}{8}\varpi_k\hat{\theta}+\frac{1}{8}\varpi_k\mu\sum_{j=2}^{k-1}
z_j\frac{\partial\alpha_{j-1}}{\partial\hat{\theta}},
\end{align}
then (\ref{equ413}) can be changed into
\begin{align*}
\dot{V}_{k}\leq &-b_{k}|e|^2 -\sum_{j=1}^{k}c_jz_j^2+z_{k}z_{k+1}
+(\lambda_{k}-\frac{1}{\mu}\dot{\hat{\theta}})\tilde{\theta}\cr
&+\mu\sum_{j=2}^{k}z_j\frac{\partial\alpha_{j-1}}{\partial\hat{\theta}}
(\lambda_{k}-\frac{1}{\mu}\dot{\hat{\theta}})
+\frac{1}{\mu}\hat{\theta}\tilde{\theta}
+d_{02}|\xi|^4+\delta_{k}\cr &+\sum_{j=1}^{k}d_{j3}|\xi|^2
-\sigma(y)y+\sum_{j=2}^{k}[2d_{j2}\bar{\phi}_1^2
+d_{j3}^{-1}\bar{\psi}_1^4 y^2]y^2 \cr
&-e^{-\tau}(1-\tau^*)Q(y(t-\tau(t)))+Q(y)-V_\tau.
\end{align*}

At \textbf{Step n}, let $Q(y)=\bar{Q}(y)y$,
$\sigma(y)=\bar{Q}(y)+\sum_{j=2}^{n}[2d_{j2}\bar{\phi}_1^2(y)
+d_{j3}^{-1}\bar{\psi}_1^4(y) y^2]y$, where
\begin{align*}
\bar{Q}(y(t))
=&\frac{4e^\tau }{1-\tau^*}||P||_F^2d_{01}^{-1}y\sum_{i=2}^n
[\bar{\phi}_{\tau i}^2(y)+\kappa_{i-1}^2\bar{\phi}_{\tau 1}^2(y)]\cr
&+\frac{e^\tau }{1-\tau^*}\sum_{j=1}^{n}[2d_{j2}\bar{\phi}_{\tau1}^2(y)
+d_{j3}^{-1}\bar{\psi}_{\tau 1}^4(y)y^2]y\cr
&+\frac{4(n-1)e^\tau }{1-\tau^*}||P||_F^4d_{02}^{-1}y^3\cr
&\cdot\sum_{i=2}^n[\bar{\psi}_{\tau i}^4(y)+\kappa_{i-1}^4\bar{\psi}_{\tau 1}^4(y)].
\end{align*}
Furthermore, selecting the update law
\begin{align}\label{equ415}
\dot{\hat{\theta}}=\mu\lambda_{n},
~~\lambda_n=\lambda_{n-1}+\frac{1}{8}z_n\varpi_n,
\end{align}
and the control law
\begin{align}\label{equ416}
u=&-c_nz_n-z_{n-1}-\bar{\beta}_n-\frac{1}{8}\varpi_n\hat{\theta}
+\mu\frac{\partial\alpha_{n-1}}{\partial\hat{\theta}}\lambda_n\cr
&+\frac{1}{8}\varpi_n\mu\sum_{j=2}^{n-1}
z_j\frac{\partial\alpha_{j-1}}{\partial\hat{\theta}},
\end{align}
then one has
\begin{align}\label{equ417}
\dot{V}_{n} \leq &-b_{n}|e|^2-\sum_{j=1}^{n}c_jz_j^2
+\frac{1}{\mu}\hat{\theta}\tilde{\theta}+d_{02}|\xi|^4\cr
&+\sum_{j=1}^{n}d_{j3}|\xi|^2-V_\tau\cr
\leq &-\frac{b_n}{\sigma_M}e^TPe-\frac{1}{2\mu}\tilde{\theta}^2-V_\tau
-\sum_{j=1}^{n}c_jz_j^2 +\frac{1}{2\mu}\theta^2\cr
&+d_{02}|\xi|^4+\sum_{j=1}^{n}d_{j3}(\frac{1}{4}+|\xi|^4)\cr \leq
&-\bar{c}V_n+\bar{d}_1|\xi|^4+\bar{d}_2,
\end{align}
where $\sigma_M=\lambda_{max}(P)$,
$\bar{c}=\min\{\frac{b_n}{\sigma_M}, 1, 2c_1, 2c_2, \cdots, 2c_n
\}$, $\bar{d}_1=d_{02}+\sum_{j=1}^{n}d_{j3}$, and
$\bar{d}_2=\frac{1}{2\mu}\theta^2+\frac{1}{4}\sum_{j=1}^{n}d_{j3}$.

\subsection{Stability analysis}
\begin{theorem}\label{the41}
Suppose that assumptions \textbf{H0}, \textbf{H1'} and \textbf{H2'}
hold for system (\ref{equ41}), then  the closed-loop system
including (\ref{equ41}), (\ref{equ415}) and (\ref{equ416})
is NSS-\textbf{2}-M and the closed-loop system solution on $[t_0-\tau, \infty)$
is UB-\textbf{2}-M. Furthermore, the regulation error in the
mean-square sense satisfies
\begin{align*}
\lim_{t\rightarrow \infty}E|y(t)|^2 \leq
\frac{2\bar{d}_1K+2\bar{d}_2}{\bar{c}},
\end{align*}
and by selecting appropriate design parameters, the upper bound of the regulation error
can be made arbitrarily small.
\end{theorem}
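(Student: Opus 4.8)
The plan is to recognize that the recursive observer--backstepping construction has already done the essential work: it produces the composite Lyapunov--Krasovskii functional $V_n=e^TPe+\frac{1}{2\mu}\tilde\theta^2+V_\tau+\frac12\sum_{i=1}^n z_i^2$ together with the key dissipation inequality (\ref{equ417}), namely $\dot V_n\le -\bar c V_n+\bar d_1|\xi|^4+\bar d_2$. This is exactly the derivative hypothesis (\ref{eq32}) of Theorem \ref{the31} with the identifications $m=2$, $r=4$, $c=\bar c$, $c_3=\bar d_1$ and $d_c=\bar d_2$. Accordingly, I would view $V_n$ as a function of the augmented closed--loop state $\chi=(z^T,e^T,\tilde\theta)^T$, verify the two--sided bound (\ref{eq31}) for $V_n$, and then read off existence and uniqueness, NSS-\textbf{2}-M and UB-\textbf{2}-M directly from Theorem \ref{the31}, finally extracting the regulation estimate from the $z_1$ component.

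The lower bound is immediate: since $P>0$, $V_\tau\ge0$ and $\frac{1}{2\mu}\tilde\theta^2\ge0$, one has $V_n\ge \lambda_{\min}(P)|e|^2+\frac12|z|^2+\frac{1}{2\mu}\tilde\theta^2\ge c_1|\chi|^2$, which both yields the radial unboundedness (\ref{eq33}) and, in particular, $\frac12 y^2=\frac12 z_1^2\le V_n$. The main obstacle is the upper bound of (\ref{eq31}): the Krasovskii term $V_\tau=\int_{t-\tau(t)}^t e^{s-t}Q(y(s))\,ds$ with $Q(y)=\bar Q(y)y$ is built from the smooth nonlinear functions $\bar\phi_{\tau i},\bar\psi_{\tau i}$ and is \emph{not} quadratic, so a clean bound of the exact form $V_n\le c_2(\sup_{-\tau\le s\le0}|\chi(t+s)|)^2$ is delicate. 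What is actually needed, however, is only control of the initial value $\nu(t_0)=EV_n(t_0)$: since $0\le\tau(t)\le\tau$ and $e^{s-t}\le1$ on the integration interval, $V_\tau(t_0)\le \tau\sup_{-\tau\le\theta\le0}Q(\varphi_1(t_0+\theta))$, which is finite for bounded initial data, so $\nu(t_0)$ is dominated by a class-$\mathcal{K}_\infty$ function of $|\varphi|$; this suffices to supply the $\beta(|\varphi|,t-t_0)$ term demanded in the NSS definition.

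With the hypotheses of Theorem \ref{the31} thus in force, I would take expectations in (\ref{equ417}) and use assumption \textbf{H0} to obtain $E\dot V_n\le -\bar c\,EV_n+\bar d_1K+\bar d_2$. Writing $\nu(t)=EV_n(t)$ and passing to the Dini inequality $D^{+}\nu(t)\le -\bar c\,\nu(t)+\bar d_1K+\bar d_2$ by Fubini's theorem [Theorem 2.39, \ref{Klebaner2005}], exactly as in the proof of Theorem \ref{the31}, the comparison Lemma \ref{lem2b4} yields $\nu(t)\le \nu(t_0)e^{-\bar c(t-t_0)}+\frac{\bar d_1K+\bar d_2}{\bar c}$. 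Radial unboundedness together with the bound $E\dot V_n\le \bar d_1K+\bar d_2$ also places the closed--loop system within the scope of Lemma \ref{lem2b2}, giving the unique global solution on $[t_0-\tau,\infty)$; the lower bound $V_n\ge c_1|\chi|^2$ then converts the estimate on $\nu(t)$ into NSS-\textbf{2}-M, and, on letting $t\to\infty$, into UB-\textbf{2}-M for the closed--loop state (the original state $x$ being recovered through the invertible observer and coordinate transformations).

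For the remaining assertions I would combine $\frac12 y^2=\frac12 z_1^2\le\frac12|z|^2\le V_n$ with the estimate on $\nu$ to get $E|y(t)|^2\le 2\nu(t_0)e^{-\bar c(t-t_0)}+\frac{2\bar d_1K+2\bar d_2}{\bar c}$, whence $\lim_{t\to\infty}E|y(t)|^2\le \frac{2\bar d_1K+2\bar d_2}{\bar c}$. Because $\bar d_1=d_{02}+\sum_{j=1}^n d_{j3}$ and $\bar d_2=\frac{1}{2\mu}\theta^2+\frac14\sum_{j=1}^n d_{j3}$ are governed by the design parameters $d_{02},d_{j3},\mu$, while $\bar c=\min\{b_n/\sigma_M,1,2c_1,\dots,2c_n\}$ can be kept bounded away from zero by choosing the $c_j$ and the observer gain so that $b_n>0$, the right--hand side can be driven arbitrarily small, which is the final claim.
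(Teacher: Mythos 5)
Your proposal follows essentially the same route as the paper: take the composite Lyapunov--Krasovskii functional $V_n$ with the dissipation inequality (\ref{equ417}), invoke Lemma \ref{lem2b2} for existence and uniqueness, repeat the comparison argument from the proof of Theorem \ref{the31} to get $\nu(t)\le\nu(t_0)e^{-\bar c(t-t_0)}+(\bar d_1K+\bar d_2)/\bar c$, and then use $V_n\ge\frac12|z|^2$ and $y=z_1$ for the NSS-\textbf{2}-M, UB-\textbf{2}-M and regulation-error claims. Your extra remark that the non-quadratic Krasovskii term prevents a literal verification of the upper bound in (\ref{eq31}), and that only a class-$\mathcal{K}_\infty$ bound on the initial value $\nu(t_0)$ is actually needed, is a correct and slightly more careful reading of a point the paper glosses over.
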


\begin{proof}
For system (\ref{equ41}), one chooses the Lyapunov-Krasovskii
functional
\begin{equation*}
V=e^TPe+\frac{1}{2\mu}\tilde{\theta}^2+\sum_{j=1}^{n}\frac{1}{2}z_j^2
+\int_{t-\tau(t)}^te^{s-t}Q(y(s))ds ,
\end{equation*}
from which one knows that $V$ is a radially unbounded functional and
\begin{align}\label{equ418}
\dot{V}\leq &-\bar{c}V+\bar{d}_1|\xi|^4+\bar{d}_2,
\end{align}
where $\bar{c}=\min\{\frac{b_n}{\sigma_M}, 1, 2c_1, 2c_2, \cdots,
2c_n \}$, $\bar{d}_1=d_{02}+\sum_{j=1}^{n}d_{j3}$, and
$\bar{d}_2=\frac{1}{2\mu}\theta^2+\frac{1}{4}\sum_{j=1}^{n}d_{j3}$
are all positive constants. Taking expectations on both sides of
(\ref{equ418}), one obtains
\begin{align*}
E\dot{V}&\leq -\bar{c}EV +\bar{d}_1E|\xi(t)|^4+\bar{d}_2.
%&\leq(\bar{d}_1K+\bar{d}_2)(1+EV(t,x)+EV(t-\tau(t), x(t-\tau(t))).
\end{align*}
By Lemma \ref{lem2b2}, the closed-loop system including
(\ref{equ41}), (\ref{equ415}) and (\ref{equ416}) has a unique
solution.

Defining $\tilde{\nu}(t)=EV$ and $z(t)=[z_1(t),  \cdots, z_n(t)]^T$,
one adopts the same way  for (\ref{equ418}) as one has
done for (\ref{eq32}) in the proof of Theorem \ref{the31}, then the
inequality
%\begin{align*}
$\tilde{\nu}(t)\leq
\tilde{\nu}(t_0)e^{-\bar{c}(t-t_0)}+(\bar{d}_1K+\bar{d}_2)/\bar{c}$
holds,
%\end{align*}
together with $V \geq \frac{1}{2}|z(t)|^2$, one gets
\begin{align}\label{equ419}
E|z(t)|^2 \leq 2E(V|_{t=t_0})e^{-\bar{c}(t-t_0)} +
\frac{2\bar{d}_1K+2\bar{d}_2}{\bar{c}},
\end{align}
which implies that the closed-loop system including (\ref{equ41}),
(\ref{equ415}) and (\ref{equ416}) is NSS-\textbf{2}-M. Furthermore,
letting $t\rightarrow \infty$ in (\ref{equ419}) yields
\begin{align}\label{equ420}
\lim_{t\rightarrow \infty}E|z(t)|^2 \leq
\frac{2\bar{d}_1K+2\bar{d}_2}{\bar{c}}.
\end{align}
This means that the state of the closed-loop system including
(\ref{equ41}), (\ref{equ415}) and (\ref{equ416}) is UB-\textbf{2}-M.

Additionally, it follows from (\ref{equ420}) that
\begin{align*}
\lim_{t\rightarrow \infty}E|y(t)|^2 \leq \lim_{t\rightarrow
\infty}E|z(t)|^2 \leq \frac{2\bar{d}_1K+2\bar{d}_2}{\bar{c}},
\end{align*}
which implies that the upper bound of regulation error in the
mean-square sense can be made small enough by  selecting $d_{02},
d_{j3}(1 \leq j \leq n)$ sufficiently small and $\mu$ large enough.
The parameters consisting of $\bar{c}, d_{02}, \mu, d_{j3}(1 \leq j
\leq n)$ are independent from each other. One completes the proof.
\end{proof}

\begin{remark}\label{rem51}
From the above design procedure of adaptive output feedback regulation controller in
this section, the observer is introduced to estimate the unmeasurable states,
which lead to the assumption  \textbf{H0} is proposed.
If the state of system (\ref{equ41}) is measurable
(i.e.,the assumption \textbf{H1} holds for  system (\ref{equ41})), and
system (\ref{equ41}) satisfies the assumptions \textbf{H0'}
(the random disturbance $\xi(t)$ satisfies $\sup_{t\geq t_0} E|\xi(t)|^2<K $)
and \textbf{H2'}, then one can design an adaptive state feedback controller to
achieve regulation control objective. The design procedure of state feedback controller
is as similar as the procedure given above, thus one omits the detailed process.
\end{remark}
\section{Simulation examples}\label{S6}

\textbf{Example 1:} The time-delay phenomenon widespreadly exists in
the chemical industry, such as  the chemical reactor recycle system
([\ref{Hua2009}], [\ref{Chen2011}]).
Consider a  two-stage chemical reactor system that is  modeled as
\begin{equation}\label{equ600}
\begin{cases}
\dot{x}_1=-l_1(t)x_1-\frac{1}{\mu_1}x_1+\frac{1-r_1}{v_1}x_2,\\
\dot{x}_2=\frac{\lambda_2}{v_2}u-l_2(t)x_2-\frac{1}{\mu_2}x_2
+\frac{r_1}{v_2}x_{1t}+\frac{r_2}{v_2}x_{2t},\\
y=x_1,
\end{cases}
\end{equation}
where $x_{1t}=x_1(t-\tau(t))$ and $x_{2t}=x_2(t-\tau(t))$,
$x_1$ and $x_2$ stand for produced streams from the two reactors;
the residence time of two reactors are $\mu_1$ and $\mu_2$; $v_1$ and $v_2$
represent reactor volumes;  $r_1$ and $r_2$ denote the recycle flow rates;
the feed rate is $\lambda_2$;  $\tau(t)=0.1(1-\sin t)$ represents the time delay;
$l_1(t)$ and $l_2(t)$ indicate the uncertain reaction functions.

For system (\ref{equ600}), [\ref{Chen2011}] studies the impact of
the white noise random disturbance for the  reactors by introducing It$\hat{o}$
stochastic differential equation. Here, one considers random differential equation by
introducing  random noise disturbance that is not white noise. Let the uncertain
reaction functions $l_i(t)=l_{i0}+\xi_i(t)/\sqrt{|x_i(t)|}$, $i=1,2$, where $l_{i0}$
are reaction constants and random disturbance $\xi(t)=[\xi_1(t), \xi_2(t)]^T$ satisfies
$\sup_{t\geq 0} E|\xi(t)|^4<K$, and one chooses $r_1=r_2=0.5$, $\mu_1=\mu_2=2$,
$l_{10}=l_{20}=0.5$, $\lambda_2=v_1=v_2=0.5$. System (\ref{equ600}) can be
rewritten as
\begin{equation}\label{equ601}
\begin{cases}
\dot{x}_1=x_2dt+f_1dt+g_1\xi_1,\\
\dot{x}_2=udt+f_2dt+g_2\xi_2,\\
y=x_1,
\end{cases}
\end{equation}
where $f_1=-x_1, g_1=-\sqrt{|x_1|},  f_2=-x_2+x_2(t-\tau(t))+x_1(t-\tau(t)), g_2=-\sqrt{|x_2|}$.
One supposes that the state variables $x_1$ and $x_2$ are available. The random disturbance
 $\xi_1(t)$ and $\xi_2(t)$ are produced by
\begin{align}\label{equ602}
\xi_1(t)&=a_1\cos(\ell_1t+U_1)\sin(\bar{\ell}_1t), \cr
\xi_2(t)&=a_2\cos(\ell_2t+U_2)\sin^2(\bar{\ell}_2t),
\end{align}
where $U_1$ and $U_2$ are random variables uniformly distributed on $[0,2\pi]$
and $a_1, a_2, \ell_1, \ell_2, \bar{\ell}_1, \bar{\ell}_2$ are real constants, then
$E\xi_1^4(t)\leq a_1^4$ and $E\xi_2^4(t)\leq a_2^4$.
It is clear that (\ref{equ601}) satisfies  assumptions \textbf{H0},
\textbf{H1} and \textbf{H2}.

In simulation, one chooses
$a_1=1, a_2=1, \ell_1=1.2, \ell_2=0.8,  \bar{\ell}_1=1.5, \bar{\ell}_2=1$,
$\pi_1=\pi_2=\pi_{21}=1$, and initial value $x_1(0)=0.5, x_2(0)=-0.5$.
The responses of closed-looped system (\ref{equ601}) and (\ref{equ59})
are shown in Figure 1, which illustrates that
the output regulation error of system (\ref{equ601}) can be controlled in
a small enough neighborhood of zero and the another state $x_2(t)$
is bounded in the mean-square sense.
\begin{figure}[htb]
 \centering
\rotatebox{360}{\scalebox{0.3}[0.3]{\includegraphics{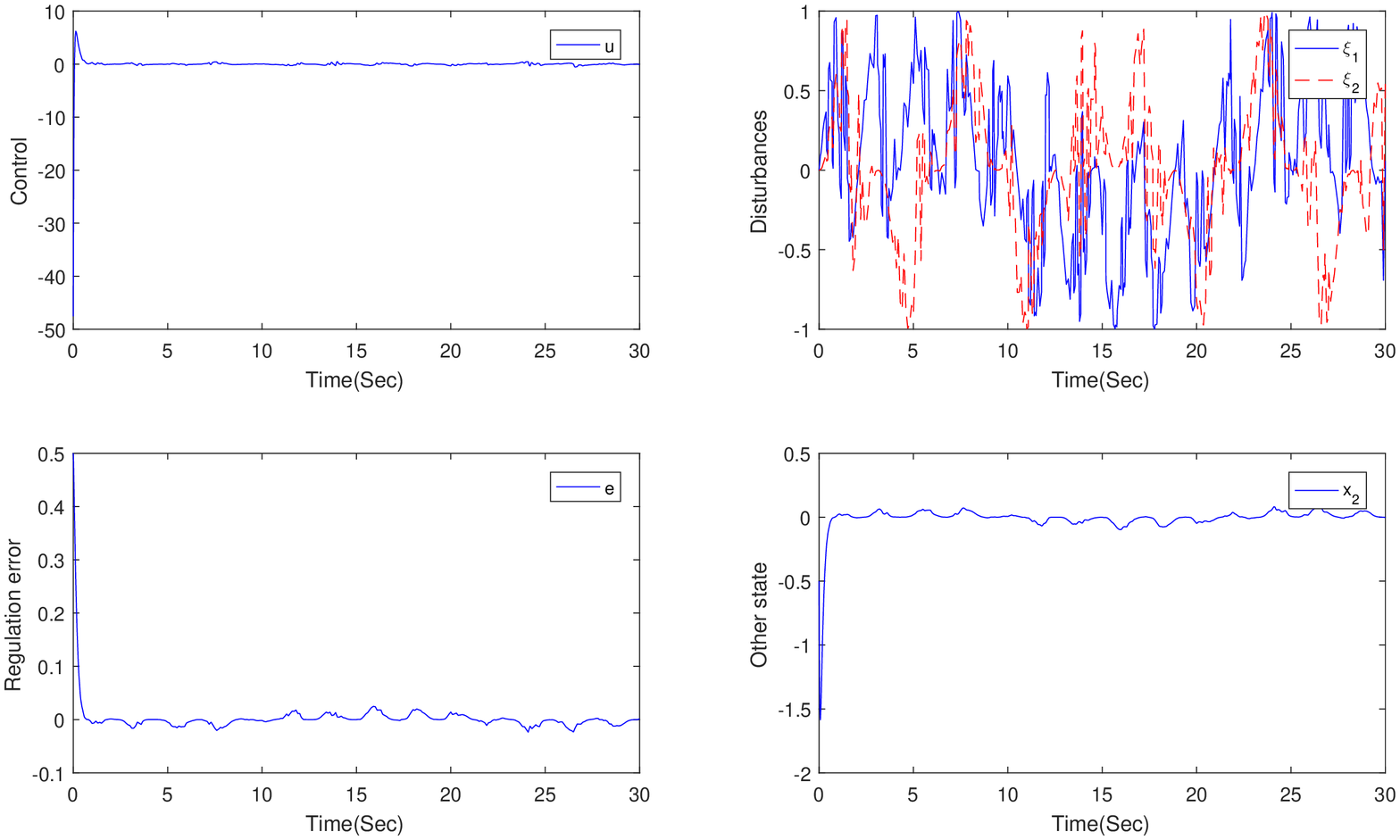}}}
\center{ {\small Figure 1.  The responses of closed-loop system
 (\ref{equ601}) and (\ref{equ59}).}}
\vspace*{0.5cm}
\end{figure}

\textbf{Example 2:} For the following random nonlinear feedback time-delay systems
\begin{equation}\label{equ621}
\begin{cases}
\dot{x}_1=x_2+\theta_1 x_{1t}^2+\theta_1 x_1\xi_1,\\
\dot{x}_2=u+\theta_2 x_1\cos(x_{2t})
+\theta_2 x_{1t}\sin(x_2)\xi_2,\\
y=x_1,
\end{cases}
\end{equation}
where  $x_{1t}=x_1(t-\tau(t))$ and $x_{2t}=x_2(t-\tau(t))$,
$\tau(t)=0.5(1+\cos t)$ that satisfies $0 \leq \tau(t)\leq 1 $
and $\dot{\tau}(t)\leq 0.5$.
Both $\theta_1$ and  $\theta_2$ are unknown parameters.
Only the state variable $x_1$ is measurable
by output $y$, the state variable  $x_2$ is unavailable.
The random disturbance $\xi_1$ and $\xi_2$ are
generated by (\ref{equ602}) with
$a_1=0.8, a_2=1.2, \ell_1=1 , \ell_2=0.5, \bar{\ell}_1=0.5, \bar{\ell}_2=1$.
So system (\ref{equ621})  satisfies assumptions \textbf{H0}, \textbf{H1'} and \textbf{H2'}.

For simulation purpose, one selects
$\theta_1=1.2$ and $\theta_2=1.5$, and initial values
$x_1(0)=0.01$,  $x_2(0)=-0.3$, $\hat{x}_1(0)=0.1, \hat{\theta}(0)=-0.5$,
the design parameters $\mu=1, \kappa_1=1, b=2$,
$d_{01}=1, d_{02}=0.1, d_{11}=d_{12}=1$, $d_{13}=0.1$, $d_{21}=d_{22}=1$,
$d_{23}=0.1$, $c_1=1, c_2=1$. The responses of closed-loop system
(\ref{equ621}), (\ref{equ415}) and (\ref{equ416}) is given in Figure 2, which
shows that the state $x_2$ is bounded in the mean-square sense and the output
regulation error $e$ can be made small enough in the mean-square sense.

\begin{figure}[htb]
 \centering
\rotatebox{360}{\scalebox{0.3}[0.3]{\includegraphics{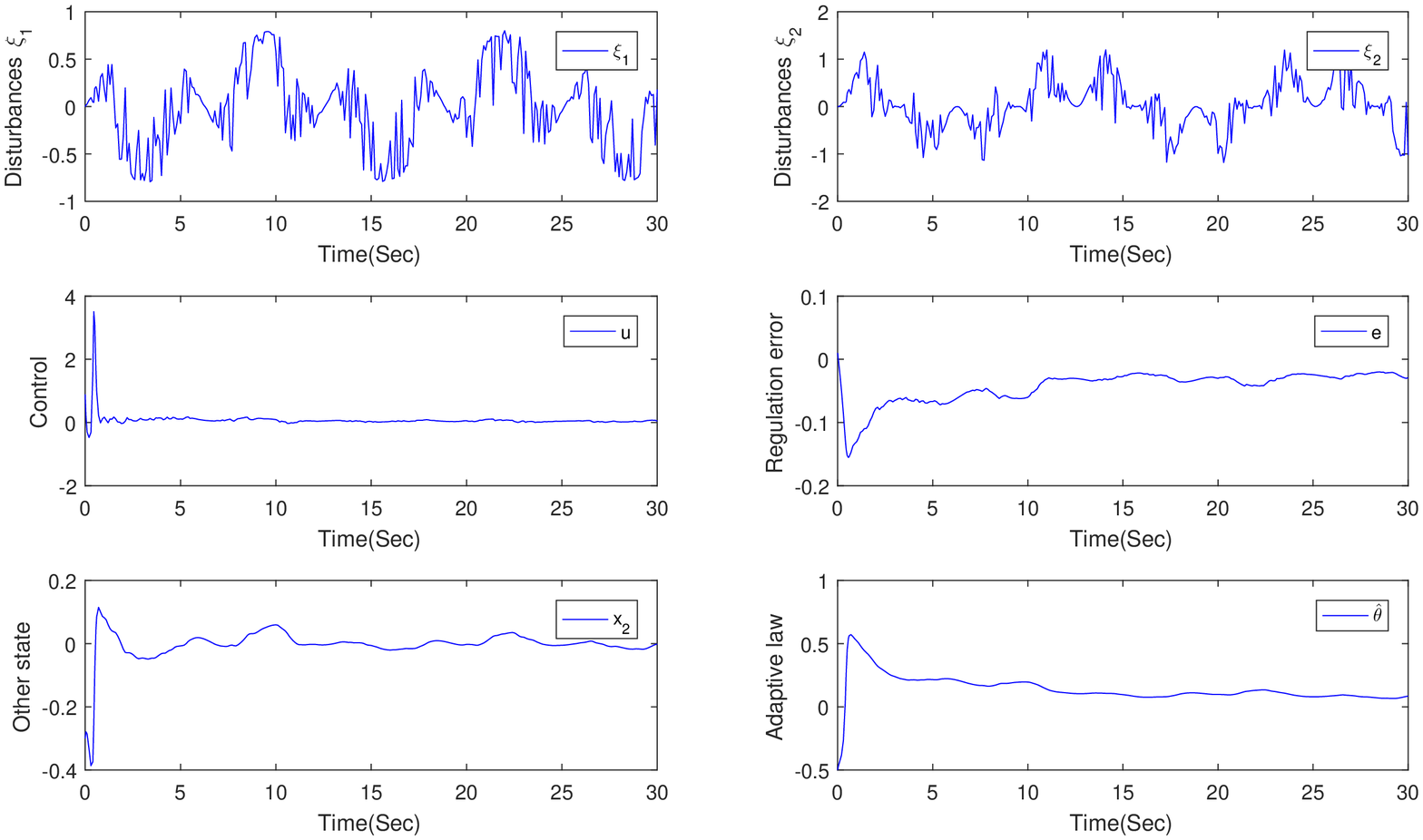}}}
\center{ {\small Figure 2.  The responses of closed-loop system
(\ref{equ621}), (\ref{equ415}) and (\ref{equ416}).}}
\vspace*{0.5cm}
\end{figure}

\section{Conclusion}\label{S7}
This paper investigates the stability of random nonlinear systems with time-delay
and its application in regulation control. The existence and
uniqueness of solution to random nonlinear time-delay systems is guaranteed
by proposed conditions. Next, some criteria on noise-to-state stability of random
nonlinear systems with time-delay are established. As applications,
two kinds of regulation  control problems are solved by state feedback method
and adaptive output feedback method.

Many related problems deserves further investigation. For instance, how
to apply the obtained results in stability analysis and synthesis of
practical systems with time-delay, such as Hamiltonian mechanical systems
and Lagrangian mechanical system; how to generalize this result to random
switched systems with time-delay.

% if have a single appendix:
%\appendix[Proof of the Zonklar Equations]
% or
%\appendix  % for no appendix heading
% do not use \section anymore after \appendix, only \section*
% is possibly needed

% use appendices with more than one appendix
% then use \section to start each appendix
% you must declare a \section before using any
% \subsection or using \label (\appendices by itself
% starts a section numbered zero.)
%

\appendices
%\section*{Appendix A:  }
\section{The Proof of (\ref{equ571})}
\renewcommand{\theequation}{A.\arabic{equation}}\setcounter{equation}{0}
\begin{proof}
From \textbf{H2} and (\ref{equ51}), one can obtain the following inequality
\begin{align}\label{equA1}
&z_if_i-z_i\sum_{j=1}^{i-1}\frac{\partial\alpha_{i-1}}{\partial x_j}(x_{j+1}+f_j)\cr
\leq &|z_i|(\bar{\theta}\sum_{l=1}^i(|x_l|+|x_l(t-\tau(t))|)\cr
&+\sum_{j=1}^{i-1}\Xi_{j(i-1)}
(|x_{j+1}|+\bar{\theta}\sum_{l=1}^j(|x_l|+|x_l(t-\tau(t))|))\cr
\leq & |z_i|\eta_i(\sum_{l=1}^i(|x_l|+|x_l(t-\tau(t))|)
+\sum_{j=1}^{i-1}|x_{j+1}|\cr
&+\sum_{j=1}^{i-1}\sum_{l=1}^j(|x_l|+|x_l(t-\tau(t))|))\cr
=& |z_i|\eta_i(\sum_{l=2}^i(i+2-l)|x_l|\cr
&+\sum_{l=1}^{i}(i+1-l)|x_l(t-\tau(t))|+i|x_1|)\cr
\leq &|z_i|\bar{\eta}_i(|z_1|+\sum_{l=2}^i(|z_l|+\beta_{l-1}|z_{l-1}|)+|z_1(t-\tau(t))|\cr
&+\sum_{l=2}^i(|z_l(t-\tau(t))|+\beta_{l-1}|z_{l-1}(t-\tau(t))|))\cr
\leq &|z_i|\tilde{\eta}_i(\sum_{l=1}^{i-1}(|z_l|+|z_l(t-\tau(t))|)\cr
&+|z_i|\bar{\eta}_{i}(|z_i|+|z_i(t-\tau(t))|),
\end{align}
where
$\eta_i=\max_{1\leq j\leq i-1}\{\bar{\theta}, \Xi_{j(i-1)},\Xi_{j(i-1)}\bar{\theta}\}$,
$\bar{\eta}_{i}=\eta_ii$,
$\tilde{\eta}_{i}=\bar{\eta}_{i}\max_{1\leq l\leq i-1}\{1+\beta_l\}$,
$i=2, 3, \cdots, n$.

By Young's inequality, one gets
\begin{align}
\label{equA2}
|z_i|\tilde{\eta}_i\sum_{l=1}^{i-1}|z_l|\leq& \frac{(i-1)}{2}\tilde{\eta}_i^2 z_i^2
+\sum_{l=1}^{i-1}\frac{1}{2}z_l^2,\\
\label{equA3}
|z_i|\bar{\eta}_{i}|z_i(t-\tau(t))|
\leq&\frac{e^{-\tau}(1-\tau^*)}{2}z_i^2(t-\tau(t))\cr
&+\frac{e^{\tau}}{2(1-\tau^*)}\bar{\eta}_i^2 z_i^2, \\
\label{equA4}
|z_i|\tilde{\eta}_i\sum_{l=1}^{i-1}|z_l(t-\tau(t))|
\leq &\frac{e^{-\tau}(1-\tau^*)}{2}\sum_{l=1}^{i-1}z_l^2(t-\tau(t))\cr
&+\frac{(i-1)e^{\tau}}{2(1-\tau^*)}\tilde{\eta}_i^2 z_i^2.
\end{align}
Substituting (\ref{equA2})-(\ref{equA4}) to (\ref{equA1}) leads
\begin{align*}
&z_i(f_i-\sum_{j=1}^{i-1}\frac{\partial\alpha_{i-1}}{\partial x_j}(x_{j+1}+f_j))\cr
\leq &\sum_{l=1}^{i-1}z_l^2+\frac{e^{-\tau}(1-\tau^*)}{2}\sum_{l=1}^{i}z_l^2(t-\tau(t))
+\Omega_iz_i^2,
\end{align*}
where $\Omega_i=\frac{(i-1)}{2}\tilde{\eta}_i^2
+\frac{(i-1)}{2(1-\tau^*)}e^{\tau}\tilde{\eta}_i^2
+\bar{\eta}_i+\frac{1}{2(1-\tau^*)}e^{\tau}\bar{\eta}_i^2$
is a positive constant.
\end{proof}

% you can choose not to have a title for an appendix
% if you want by leaving the argument blank
\section{The Proof of (\ref{equ572})}
\renewcommand{\theequation}{B.\arabic{equation}}\setcounter{equation}{0}
\begin{proof}
It follows from \textbf{H2} and (\ref{equ51}) that
\begin{align}\label{equB1}
&z_ig_i\xi_i-z_i\sum_{j=1}^{i-1}\frac{\partial\alpha_{i-1}}{\partial x_j}g_j\xi_j\cr
\leq &\frac{1}{2}|z_i|(g_i^2+|\xi_i|^2)
+\frac{1}{2}|z_i|\sum_{j=1}^{i-1}\Xi_{j(i-1)}(g_j^2+|\xi_j|^2)\cr
\leq & |z_i|\zeta_i(\sum_{l=1}^i(|x_l|+|x_l(t-\tau(t))|)\cr
&+\sum_{j=1}^{i-1}\sum_{l=1}^j(|x_l|+|x_l(t-\tau(t))|))\cr
&+\frac{1}{2}|z_i|(|\xi_i|^2+\sum_{j=1}^{i-1}\Xi_{j(i-1)}|\xi_j|^2)\cr
=& |z_i|\zeta_i\sum_{l=1}^{i}(i-l+1)(|x_l|+|x_l(t-\tau(t))|)\cr
&+\frac{1}{2}|z_i|(|\xi_i|^2+\sum_{j=1}^{i-1}\Xi_{j(i-1)}|\xi_j|^2)\cr
\leq& |z_i|\bar{\zeta}_i\sum_{l=1}^{i}(|x_l|+|x_l(t-\tau(t))|)\cr
&+\frac{1}{2}|z_i|(|\xi_i|^2+\sum_{j=1}^{i-1}\Xi_{j(i-1)}|\xi_j|^2)\cr
\leq& |z_i|\bar{\zeta}_i(|z_1|+|z_1(t-\tau(t))|
+\sum_{l=2}^i(|z_l|+\beta_{l-1}|z_{l-1}|)\cr
&+\sum_{l=2}^i(|z_l(t-\tau(t))|+\beta_{l-1}|z_{l-1}(t-\tau(t))|))\cr
&+\frac{1}{2}|z_i|(|\xi_i|^2+\sum_{j=1}^{i-1}\Xi_{j(i-1)}|\xi_j|^2)\cr
\leq &|z_i|\tilde{\zeta}_i(\sum_{l=1}^{i-1}(|z_l|+|z_l(t-\tau(t))|)\cr
&+|z_i|\bar{\zeta}_{i}(|z_i|+|z_i(t-\tau(t))|)\cr
&+\frac{1}{2}|z_i|(|\xi_i|^2+\sum_{j=1}^{i-1}\Xi_{j(i-1)}|\xi_j|^2),
\end{align}
where
$\zeta_i=\max_{1\leq j\leq i-1}\{\frac{1}{2}\bar{\theta}, \frac{1}{2}\bar{\theta}\Xi_{j(i-1)}\}$,
$\bar{\zeta}_i=\zeta_ii$,
$\tilde{\zeta}_{i}=\bar{\zeta}_{i}\max_{1\leq l\leq i-1}\{1+\beta_l\}$,
$i=2, 3, \cdots, n$.

By Young's inequality, one obtains
\begin{align}
\label{equB2}
\frac{1}{2}|z_i||\xi_i|^2 \leq& \frac{1}{16\pi_i}z_i^2+\pi_i|\xi_i|^4,\\
\label{equB3}
|z_i|\tilde{\zeta}_i\sum_{l=1}^{i-1}|z_l|
\leq& \frac{(i-1)}{2}\tilde{\zeta}_i^2 z_i^2+\sum_{l=1}^{i-1}\frac{1}{2}z_l^2,\\
\label{equB4}
\frac{1}{2}|z_i|\sum_{j=1}^{i-1}\Xi_{j(i-1)}|\xi_j|^2
\leq& \sum_{j=1}^{i-1}\frac{1}{16\pi_{ij}}\Xi_{j(i-1)}^2z_i^2\cr
&+\sum_{j=1}^{i-1}\pi_{ij}|\xi_j|^4,\\
\label{equB5}
|z_i|\bar{\zeta}_{i}|z_i(t-\tau(t))|
\leq& \frac{e^{-\tau}(1-\tau^*)}{2}z_i^2(t-\tau(t))\cr
&+\frac{e^{\tau}}{2(1-\tau^*)}\bar{\zeta}_i^2 z_i^2, \\
\label{equB6}
|z_i|\tilde{\zeta}_i\sum_{l=1}^{i-1}|z_l(t-\tau(t))|
\leq&\frac{e^{-\tau}(1-\tau^*)}{2}\sum_{l=1}^{i-1}z_l^2(t-\tau(t))\cr
&+\frac{(i-1)e^{\tau}}{2(1-\tau^*)}\tilde{\zeta}_i^2 z_i^2.
%\label{equB6}
%\frac{1}{2}|z_i|\sum_{j=1}^{i-1}\Xi_{j(i-1)}|\xi_j|^2
%\leq& \sum_{j=1}^{i-1}\frac{1}{16\pi_{ij}}\Xi_{j(i-1)}^2z_i^2\cr
%&+\sum_{j=1}^{i-1}\pi_{ij}|\xi_j|^4.
\end{align}
Substituting (\ref{equB2})-(\ref{equB6}) to (\ref{equB1}) yields
\begin{align*}
&z_ig_i\xi_i-z_i\sum_{j=1}^{i-1}\frac{\partial\alpha_{i-1}}{\partial x_j}g_j\xi_j\cr
\leq &\sum_{l=1}^{i-1}z_l^2+\Lambda_iz_i^2+\pi_i|\xi_i|^4+\sum_{j=1}^{i-1}\pi_{ij}|\xi_j|^4\cr
&+\frac{e^{-\tau}(1-\tau^*)}{2}\sum_{l=1}^{i}z_l^2(t-\tau(t)),
\end{align*}
where $\Lambda_i=\frac{(i-1)}{2}\tilde{\zeta}_i^2
+\frac{(i-1)}{2(1-\tau^*)}e^{\tau}\tilde{\zeta}_i^2
+\bar{\zeta}_i+\frac{1}{2(1-\tau^*)}e^{\tau}\bar{\zeta}_i^2
+\frac{1}{16\pi_i}
+\sum_{j=1}^{i-1}\frac{1}{16\pi_{ij}}\Xi_{j(i-1)}^2$ is a positive constant.
\end{proof}

% use section* for acknowledgment
%\section*{Acknowledgment}

%The authors would like to thank...

% Can use something like this to put references on a page
% by themselves when using endfloat and the captionsoff option.
\ifCLASSOPTIONcaptionsoff
  \newpage
\fi

\end{document}